\def\<{\langle}
\def\>{\rangle}
\def\be{\begin{equation}}
\def\ee{\end{equation}}
\def\ba{\begin{array}}
\def\ea{\end{array}}
\newtheorem{thm}{Theorem}[section]
\newtheorem{lem}[thm]{Lemma}
\newtheorem{defn}[thm]{Definition}
\newtheorem{rem}{Remark}
\numberwithin{equation}{section}
\def\be{\begin{equation}}
\def\ee{\end{equation}}
\def\br{\begin{eqnarray}}
\def\er{\end{eqnarray}}
\title{Long time stability of KAM tori for nonlinear wave equation
\footnote{Supported by  the NNSFC 11101059, NNSFC 11201292, SNSFC 12ZR1444300, ZZegd12022 and NNSFC 11301358.}}
\author{$\mbox{Hongzi \ Cong}^1$ \hspace{0.2 cm}
        $\mbox{Meina \ Gao}^2$ \hspace{0.2 cm}
        $\mbox{Jianjun \ Liu}^3$ \hspace{0.2 cm}
        \\
${}^{1}$ School of Mathematical Sciences, Dalian University of
Technology \\
${}^{2}$ School of Science, Shanghai Second Polytechnic University\\
${}^{3}$ School of Mathematical Sciences, Sichuan University}
\begin{document}
\maketitle
 {\bf Abstract}: {\it  It is proved that the KAM tori (thus quasi-periodic solutions) are long time stable for infinite dimensional Hamiltonian systems generated by nonlinear wave equation, by constructing a partial normal form of higher order around the KAM torus and showing $p$-tame property persists under KAM iterative procedure and normal form iterative procedure.}

 {\bf Key words:} KAM tori, Normal form, Stability, $p$-tame property, KAM technique. \baselineskip=5mm \maketitle
\section{Introduction and main results}

Since the initial work \cite{K3,K2,K1,W} of infinite dimensional KAM
theory by Kuksin and Wayne, there has been a lot of work about the
existence of KAM tori for the nonlinear wave equation (NLW)
\begin{equation}\label{1}
u_{tt}-u_{xx}+V(x)u+u^3+h.o.t.=0
\end{equation}
subject to Dirichlet boundary conditions $u(t,0)=u(t,\pi)=0$. For
examples, Wayne \cite{W} obtains the existence when $V(x)$ does not
belong to some set of ``bad" potentials; Kuksin \cite{K1} considers
parameterized potentials $V(x,\xi)$ and shows that there are many
quasi-periodic solutions for ``most" parameters $\xi$'s; P\"{o}schel
\cite{P3} proves that the potential $V=V(x)$ can be replaced by a
fixed constant potential $V\equiv m$; Yuan \cite{Yuan3} shows the
existence of KAM tori for any prescribed non-constant potential. All
of these results are obtained by the classic KAM iteration which
involves the so-called second Melnikov conditions. Thus, every KAM
torus is linearly stable, and around it a normal form of order 2 is
obtained. Based on the normal form, one can directly obtain
$\delta^{-1}$ long time stability of KAM tori, where $\delta$ is the
distance between initial data of the solutions and KAM tori.

A natural question is whether the KAM tori are stable in a longer
time such as $\delta^{-\mathcal{M}}$ for any given $\mathcal{M}\geq
0$. There has been a lot of work on the long time stability of the
equilibrium point $u=0$ and some approximate invariant tori for
partial differential equations. For examples, see
\cite{Bam1,Bam3,BDGS,BG,BN,B6,DS,GIP}. It is proposed by Eliasson
\cite{Elia2} that whether such stability results can be proven in a
neighborhood of a given KAM torus. Also see \cite{BB1} by Berti and
Biasco. Recently, the long time stability of KAM tori for nonlinear
Schr$\ddot{\mbox{o}}$dinger equation was obtained in \cite{CLY}. The
basic idea is that, first, define a suitable $p$-tame property which
generalizes the idea in \cite{BG}; second, prove the $p$-tame
property persists under KAM iterative procedure and normal form
iterative procedure; third, based on some reasonable non-resonant
conditions, one can construct a partial normal form of high order
around a KAM torus; finally, combining the $p$-tame property with
the partial normal form of high order, one can deduce that the
solutions starting in the neighborhood of a KAM torus still stay in
the neighborhood of the KAM torus for a polynomial long time, that
is, the KAM tori are stable in a long time.

In this paper, we will prove the long time stability of KAM tori for
nonlinear wave equation. The method follows a parallel course as in
\cite{CLY} except two essential differences: (1) more regularity is
needed for the Hamiltonian vector field generated by nonlinear
wave equation to guarantee that the KAM iterative procedure works
(it is necessary for measure estimate and see \cite{P1} for the
details); (2) since the frequencies of nonlinear wave equation has
worse approximations than nonlinear Schr$\ddot{\mbox{o}}$dinger
equation, whether the non-resonant conditions hold when constructing
the partial normal form of higher order? We will deal with the first
problem by modifying the definition of $p$-tame norm (see
(\ref{091402}) in Definition \ref{071803}) and estimate the measure
of non-resonant set by the method as in \cite{Bam2011} (see Section
5 for the details). The following is our main result:

\begin{thm}\label{T4}Consider the nonlinear
wave equation
\begin{equation}\label{26}
u_{tt}=u_{xx}-(m+M_{\xi})u+\varepsilon u^3,
\end{equation}
subject to Dirichlet boundary conditions $u(t,0)=u(t,\pi)=0$, where
$m$ is a non-negative constant and $M_{\xi}$ is a real Fourier
multiplier,
\begin{equation}\label{091913}
M_{\xi} \sin jx=\xi_j\sin jx,
\end{equation}
\begin{equation*}
\xi\in\Pi:=\left\{\xi=(\xi_j)_{j\geq1}|\ {\xi}_j\in [1,2]/j,\ j\geq
1\right\}\subset\mathbb{R}^{\mathbb{N}}.
\end{equation*}
Given an integer $n\geq1$ and a real number $p\geq1$, for any
sufficiently small $\varepsilon>0$, there exists a large subset $
\tilde\Pi\subset\Pi$, such that for each $\xi\in\tilde\Pi$ equation
(\ref{26}) possesses a linearly stable $n$-dimensional KAM torus
$\mathcal{T}_{\xi}$ in Sobolev space $H^p_0([0,\pi])$. Moreover, for
arbitrarily given $\mathcal M$ with $0\leq \mathcal{M}\leq
C(\varepsilon)$ (where $C(\varepsilon)$ is a constant depending on
$\varepsilon$ and $C(\varepsilon)\rightarrow\infty$ as
$\varepsilon\rightarrow0$) and $p\geq 24(\mathcal{M}+7)^{4}+1$,
there exists a small positive $\delta_0$ depending on $n,p$ and
$\mathcal{M}$, and for any $0<\delta<\delta_0$ and any solution
$u(t,x)$ of equation (\ref{26}) with the initial datum satisfying
$${d}_{H^p_0[0,\pi]}(u(0,x),\mathcal{T}_{\xi}):=\inf_{w\in\mathcal{T}_\xi}||u(0,x)-w||_{H^p_0[0,\pi]}\leq \delta,$$
then
\begin{equation*}{d}_{H^p_0[0,\pi]}(u(t,x),\mathcal{T}_{\xi}):=\inf_{w\in\mathcal{T}_\xi}||u(t,x)-w||_{H^p_0[0,\pi]}\leq
2\delta,\qquad \mbox{for all} \ |t|\leq {\delta}^{-\mathcal{M}}.
\end{equation*}
\end{thm}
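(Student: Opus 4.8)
The plan is to reduce the long-time stability statement to two independent technical pillars, exactly as announced in the introduction: a KAM-type theorem producing an analytic symplectic change of variables that puts equation~\eqref{26} into a partial Birkhoff normal form around the $n$-dimensional torus, together with a quantitative $p$-tame estimate on the remainder; and a normal form iteration that, thanks to the $p$-tame property being preserved, kills all non-resonant monomials of degree up to some large order $N=N(\mathcal M)$ near the torus. I would begin by introducing action-angle-type coordinates $(\theta,I,z,\bar z)$ adapted to $\mathcal T_\xi$, so that the Hamiltonian of \eqref{26} takes the form $H=\langle\omega(\xi),I\rangle+\sum_j\Omega_j(\xi) z_j\bar z_j+P$, where $\omega$ are the tangential (Diophantine, after excising a small-measure set of $\xi$) frequencies, $\Omega_j\sim\sqrt{j^2+m}$ are the normal frequencies, and $P$ is the perturbation, small and $p$-tame in the sense of Definition~\ref{071803} with the modified norm~\eqref{091402}. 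The linearly stable KAM torus $\mathcal T_\xi$ and the large parameter set $\tilde\Pi$ come from the KAM theorem that I would invoke from the earlier sections; the novelty relative to the NLS case of \cite{CLY} is that one must carry along the extra regularity (hence the stronger hypothesis $p\ge 24(\mathcal M+7)^4+1$) needed for the measure estimates, and verify the non-resonance inequalities for the worse-approximated wave frequencies via the argument of \cite{Bam2011}.

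Next I would run the partial normal form procedure: on a neighborhood of size $r$ of the torus, successively eliminate non-resonant terms of the Taylor expansion of $P$ in $(z,\bar z,I)$ up to degree $N$, solving at each step a homological equation whose small divisors are controlled by the second Melnikov-type non-resonant conditions established in Section~5. The crucial point is that each homological step is a time-one flow of a Hamiltonian which inherits the $p$-tame property, so that after $N$ steps one obtains coordinates in which $H=\langle\omega,I\rangle+\sum_j\Omega_j z_j\bar z_j+Z+R$, where $Z$ is in normal form (a function of the actions $I$ and $|z_j|^2$, hence Poisson-commuting with $\sum_j\Omega_j z_j\bar z_j+\langle\omega,I\rangle$ at the relevant order) and $R$ is a remainder of degree $>N$ in $(z,\bar z)$, whose $p$-tame norm is bounded by $C r^{N}$ on the ball of radius $r$. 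Both the convergence of this finite iteration and the final bound on $R$ follow from the quantitative tame estimates; I would keep careful track of the dependence of constants on $N$, since $N$ will ultimately be chosen as an increasing function of $\mathcal M$.

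Finally I would close the argument by a bootstrap/continuity estimate on the action variables along the flow. Writing $\Phi$ for the composition of the KAM transformation and the normal form transformation, and letting $(\theta(t),I(t),z(t),\bar z(t))$ be the image of a solution with $d_{H^p_0}(u(0,\cdot),\mathcal T_\xi)\le\delta$, one has $\|(I(0),z(0),\bar z(0))\|\lesssim\delta$ in the appropriate $p$-norm. Since $Z$ preserves the actions and $\{\sum_j\Omega_j z_j\bar z_j,\cdot\}$ together with $\{\langle\omega,I\rangle,\cdot\}$ produce only bounded rotations, the only source of drift of $\sum_j j^{2p} |z_j|^2 + \text{(action part)}$ is $\{R,\cdot\}$, whose $p$-tame bound $Cr^{N}$ on the relevant ball gives $\frac{d}{dt}\big(\text{size}\big)\lesssim r^{N}$. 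A standard continuation argument — as long as the solution stays in the ball of radius $r\sim\delta^{1/2}$ (say), the size changes by at most $C t\,\delta^{N/2}$, which is $\le \delta$ provided $|t|\le c\,\delta^{1-N/2}$ — then shows the solution stays $2\delta$-close to $\mathcal T_\xi$ for $|t|\le\delta^{-\mathcal M}$ once $N$ is chosen so that $N/2-1\ge\mathcal M$, i.e.\ $N\gtrsim 2(\mathcal M+1)$; translating back through $\Phi$, which is close to the identity and bi-Lipschitz on $H^p_0$, converts this into the stated $H^p_0$ estimate. The main obstacle, and the place where the wave equation genuinely differs from NLS, is verifying that the non-resonant conditions needed for the degree-$\le N$ normal form actually hold on a large-measure set of parameters: the frequencies $\Omega_j\sim\sqrt{j^2+m}$ cluster much more tightly than the NLS frequencies $\sim j^2$, so the small-divisor estimates and the excision measure must be handled with the sharper counting of \cite{Bam2011}, and it is precisely this that forces the quantitative link $p\ge 24(\mathcal M+7)^4+1$ between the required Sobolev regularity and the stability exponent $\mathcal M$.
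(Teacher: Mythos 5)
Your overall architecture (KAM normal form of order $2$ around the torus, then a higher-order normal form whose remainder is small, then a bootstrap on the distance to the torus) matches the paper's. But there is a genuine gap in the middle step as you describe it: you propose to ``successively eliminate non-resonant terms of the Taylor expansion of $P$ in $(z,\bar z,I)$ up to degree $N$'', i.e.\ a full Birkhoff normal form to high order in \emph{all} normal modes. For the wave equation this cannot be done: the required small-divisor conditions $\langle k,\breve\omega\rangle+\langle l,\breve\Omega\rangle\neq 0$ with $|l|\le N$ and $l$ supported on arbitrarily high modes fail on any large-measure parameter set, since $\breve\Omega_j=j+O(1/j)$ makes combinations such as $\breve\Omega_{i}-\breve\Omega_{j}-\breve\Omega_{k}+\breve\Omega_{l}$ with $i-j-k+l=0$ accumulate at zero. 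The paper's normal form (Theorem \ref{thm7.1}) is genuinely \emph{partial}: one fixes a cutoff $\mathcal N$, splits the normal modes into low ($\le\mathcal N$) and high ($>\mathcal N$), imposes non-resonance only for monomials carrying at most two high-mode factors ($|\hat l|\le 2$, see (\ref{091202})--(\ref{091208})), and the case $|\hat l|=2$ with opposite signs is the only place the asymptotics of \cite{Bam2011} are needed. The residual term $Q=O(\|\hat q\|_p^3)$, which is never normalized, is then controlled not by the normal form but by the high-mode gain $\|\hat z\|_1\le(\mathcal N+1)^{-(p-1)}\|\hat z\|_p\le\delta^{\mathcal M+1}\|\hat z\|_p$ coming from the choice (\ref{091302}) of $\mathcal N$. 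Your scheme has no analogue of this mechanism, and without it the bootstrap does not close.

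This omission also explains why your sketch never actually uses $p$: the hypothesis $p\ge 24(\mathcal M+7)^4+1$ does not come from the measure estimates, as you assert, but from balancing the two competing constraints on $\mathcal N$ in the proof of Theorem \ref{T3} --- $\mathcal N$ must be large enough that $\mathcal N^{p-1}\ge\delta^{-(\mathcal M+1)}$ (to kill $Q$), yet the normal form constants grow like $\mathcal N^{6(\mathcal M+7)^2\mathcal M}$, and one needs this to be at most $\delta^{-1/4}$; eliminating $\mathcal N$ yields precisely $p-1\ge 24(\mathcal M+7)^2\mathcal M(\mathcal M+1)$. A correct write-up must make the low/high splitting, the restriction $|\hat l|\le 2$, and the $\mathcal N$-dependent treatment of $Q$ explicit; otherwise the choice of $N\gtrsim 2(\mathcal M+1)$ and $r\sim\delta^{1/2}$ in your final paragraph rests on a normal form that does not exist.
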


\begin{rem}
Instead of equation (\ref{26}), we can also prove the long time stability of KAM tori for general nonlinear
wave equations, such as
\begin{equation}\label{082003}
u_{tt}=u_{xx}-V(x)u+\varepsilon g(x,u),
\end{equation}
where $V(x)$ is a smooth and $2\pi$ periodic potential, and
$g(x,u)$ is a smooth function on the domain
$\mathbb{T}\times\mathcal{U}$, $\mathcal{U}$ being a neighborhood of
the origin in $\mathbb{R}$.
 Equation (\ref{082003}) was discussed in \cite{BG} and shown that
 the origin is stable in long time by the infinite dimensional
 Birkhoff normal form theorem.
\end{rem}


The rest of the present paper is organized as follows. In \S 2, we
give some basic notations and the definition of $p$-tame norm for a
Hamiltonian vector field. In \S 3, we construct a normal form of
order 2, which satisfies $p$-tame property, around the KAM tori
based on the standard KAM method (see Theorem \ref{T1}) and a
partial normal form of order $\mathcal{M}+2$ in the
$\delta$-neighborhood of the KAM tori (see Theorem \ref{thm7.1}).
Since the iterative procedure is parallel to \cite{CLY}, we only
prove the measure estimate in detail. Finally, due to the partial
normal form of order $\mathcal{M}+2$ and $p$-tame property, we show
that the KAM tori are stable in a long time (see Theorem \ref{T3}).
In \S 4, we finish the proof of Theorem \ref{T4}. In \S 5, we list
some properties of $p$-tame norm. These properties are used in the
proof of Theorem \ref{T1} and Theorem \ref{thm7.1} to ensure the
$p$-tame property surviving under KAM iterative procedure and normal
form iterative procedure.

\section{Definition of $p$-tame norm for a Hamiltonian vector field}
We will define $p$-tame norm for a Hamiltonian vector field in this section. First we introduce the
functional setting and the main notations concerning infinite dimensional Hamiltonian systems. Consider the Hilbert space of complex-valued sequences
\begin{equation*}
\ell^2_p:=\left\{q=(q_1,q_2,\dots):||q||_p^2:=\sum_{j\geq1}|q_j|^2j^{2p}<+\infty\right\}
\end{equation*}
with $p>1/2$, and the symplectic phase space
\begin{equation*}
(x,y,z)\in
D(s)\times\mathbb{C}^n\times\ell_{b,p}^2:=\mathcal{P}^p,\qquad
z:=(q,\bar q)\in\ell_{b,p}^2:=\ell_p^2\times\ell^{2}_p,
\end{equation*}
where $D(s):=\left\{x\in\mathbb{C}^n/(2\pi\mathbb{Z})^n| \
||\mbox{Im}\ x||< s\right\}$ is the complex open $s$-neighborhood of
the $n$-torus $\mathbb{T}^n:=\mathbb{R}^n/(2\pi\mathbb{Z})^n$,
equipped with the canonic symplectic structure:
\begin{equation*}
\sum_{j=1}^{n}dy_j\wedge dx_j+\sqrt{-1}\sum_{j\geq1}dq_j\wedge d\bar{q}_j.
\end{equation*}
Let
\begin{equation*}
D(s,r_1,r_2)=\left\{(x,y,z)\in \mathcal{P}^p\left|\right.||\mbox{Im}\ x||< s,||y||< r_1^2,||z||_p< r_2\right\},
\end{equation*}
where $||\cdot||$ denote the sup-norm for
complex vectors and
\begin{equation*}
||z||_{p}=||q||_{p}+||\bar q||_p\qquad\mbox{with}\ z=(q,\bar q).
\end{equation*}
 Any analytic function $W:D(s,r_1,r_2)\rightarrow\mathbb{C}$ can be developed in a totally convergent power series
\begin{equation*}
W(x,y,z)=\sum_{\alpha\in\mathbb{N}^n,\beta\in\mathbb{N}^{\bar{\mathbb{Z}}}}
W^{\alpha\beta}(x)y^{\alpha}z^{\beta},\qquad
\bar{\mathbb{Z}}=\mathbb{Z}\setminus\{0\}.
\end{equation*}Note that there is a multilinear, symmetric, bounded map
\begin{equation*}
\widetilde{W^{\alpha\beta}(x)}\in\mathcal{L}\left(\overbrace{\mathbb{C}^n\times\dots\times\mathbb{C}^n}^{|\alpha|-times}\times\overbrace{
\ell_{b,p}^2\times\dots\times\ell^{2}_{b,p}}^{|\beta|-times},\mathbb{C}\right)
\end{equation*}
such that
$$
\widetilde {W^{\alpha\beta}(x)}(\overbrace{y,\dots,y}^{|\alpha|-times},
\overbrace{z.\dots,z}^{|\beta|-times})=W^{\alpha\beta}(x)y^{\alpha}z^{\beta},$$
where $|\alpha|=\sum_{j=1}^{n}|\alpha_j|, |\beta|=\sum_{j\in\bar{{\mathbb{Z}}}}|\beta_j|$, and $|\cdot|$ denotes the $1$-norm here and below.

We will study the Hamiltonian system
\begin{equation*}
(\dot{x},\dot{y},\dot{z})=X_W(x,y,z),
\end{equation*}
where $X_W$ is the Hamiltonian vector field of $W$,
\begin{equation*}
X_W=(W_y,-W_x,\sqrt{-1}JW_z),
\end{equation*}
and
\begin{equation*}
J:=\left(\begin{array}{cc}
0&I\\-I&0
\end{array}\right).
\end{equation*}

\begin{defn}\label{112901}
Consider a function $W(x;\xi):D(s)\times\Pi\rightarrow \mathbb{C}$
is analytic in the variable $x\in D(s)$ and $C^1$-smooth in the parameter $\xi\in\Pi$ in the
Whitney's sense\footnote{In the whole of this paper, the derivatives
with respect to the parameter $\xi\in\Pi$ are understood in the
sense of Whitney.}, and the Fourier
series of $W(x;\xi)$ is given by
$$W(x;\xi)=\sum_{k\in \mathbb Z^n}\widehat
W(k;\xi)e^{\sqrt{-1}\langle k,x\rangle},$$where $$\widehat
W(k;\xi):=\frac1{(2\pi)^n}\int_{\mathbb{T}^n}W(x;\xi)e^{-\sqrt{-1}\langle
k,x\rangle}dx$$ is the $k$-th Fourier coefficient of
$W(x;\xi)$, and $\langle\cdot,\cdot\rangle$ denotes the usual inner
product, i.e.
\begin{equation*}
\langle k,x\rangle=\sum_{j=1}^nk_jx_j.
\end{equation*}
Then define the norm $||\cdot||_{D(s)\times\Pi}$ of $W(x;\xi)$ by
\begin{equation}\label{092704}
||W||_{D(s)\times\Pi}=\sup_{\xi\in\Pi,j\geq 1}{\sum_{k\in\mathbb{Z}^n}
\left(|\widehat{W}(k;\xi)|+|\partial_{\xi_j}
\widehat{W}(k;\xi)|\right)e^{|k|s}}.
\end{equation}
\end{defn}
\begin{defn}\label{112902}Let
\begin{equation*}
D(s,r)=\{(x,y)\in D(s)\times\mathbb{C}^n|\ ||\mbox{Im}\ x||< s,\
||y||< r^2\}.
\end{equation*}Consider a function $W(x,y;\xi):D(s,r)\times\Pi\rightarrow
\mathbb{C}$ is analytic in the variable $(x,y)\in D(s,r)$ and
$C^1$-smooth in the parameter $\xi\in\Pi$ with the following form
\begin{equation*}
W(x,y;\xi)=\sum_{\alpha\in\mathbb{N}^n}W^{\alpha}(x;\xi)y^{\alpha}.
\end{equation*}Then
define the norm $||\cdot||_{D(s,r)\times\Pi}$ of $W(x,y;\xi)$
by \begin{equation}\label{092703}
||W||_{D(s,r)\times\Pi}=\sum_{\alpha\in\mathbb{N}^n}
|||\widetilde{\mathcal{W}^{\alpha}}|||\;r^{2|\alpha|},
\end{equation}
where $\mathcal{W}^{\alpha}=||W^{\alpha}(x;\xi)||_{D(s)\times\Pi}$,
 $\widetilde{{\mathcal W}^{\alpha}}\in\mathcal{L}(\overbrace{\mathbb{C}^n\times\dots\times\mathbb{C}^n}^{|\alpha|-times},\mathbb{C})$$ $ is an $|\alpha|$-linear symmetric bounded map such that $$
\widetilde {\mathcal W^{\alpha}}(\overbrace{y,\dots,y}^{|\alpha|-times})=\mathcal{W}^{\alpha}y^{\alpha},$$and $|||\cdot|||$ is the operator norm of multilinear symmetric bounded maps.
\end{defn}

\begin{defn}\label{021002}
Consider a function
$W(x,y,z;\xi):D(s,r,r)\times\Pi\rightarrow\mathbb{C}$ is
analytic in the variable $(x,y,z)\in D(s,r,r)$ and
$C^1$-smooth in the parameter $\xi\in\Pi$ with the following form
$$W(x,y,z;\xi)=\sum_{\beta\in\mathbb{N}^{\bar{\mathbb{Z}}}}
W^{\beta}(x,y;\xi)z^{\beta}.$$
Define the modulus $\lfloor W\rceil_{D(s,r)\times\Pi}(z)$ of
$W(x,y,z;\xi)$ by
\begin{equation}\label{081602}\lfloor W\rceil_{D(s,r)\times\Pi}(z):=
\sum_{\beta\in\mathbb{N}^{\bar{\mathbb{Z}}}}||W^{\beta}||_{D(s,r)\times\Pi}\;
z^{\beta}.\end{equation}
\end{defn}
\begin{defn}\label{071803}Let $$W(x,y,z;\xi):=W_h(x,y,z;\xi)=\sum_{\beta\in\mathbb{N}^{\bar{\mathbb{Z}}},|\beta|=h}
W_h^{\beta}(x,y;\xi)z^{\beta}$$ be a function
is
analytic in the variable $(x,y,z)\in D(s,r,r)$ and
$C^1$-smooth in the parameter $\xi\in\Pi$, and let
\begin{eqnarray}\label{091401}||(z^{h})||_{p,1}:=
\frac{1}{h}\sum_{j=1}^{h}||z^{(1)}||_1\cdots||z^{(j-1)}||_1||
z^{(j)}||_p||z^{(j+1)}||_1\cdots||z^{(h)}||_1.
\end{eqnarray}
Define the $p$-tame operator norm for $W_z$ by
\begin{eqnarray}\label{091402}
|||{W_z}|||_{p,D(s,r)\times\Pi}^{T}: =\sup_{0\neq z^{(j)}\in \ell
^2_{b,p},1\leq j\leq h-1}
\frac{||{\lfloor{\widetilde{JW_z}}\rceil}_{D(s,r)\times\Pi}(z^{(1)},\dots,z^{(h-1)})||_
{p+1}} {||(z^{h-1})||_{p,1}},\end{eqnarray}
define the $1$-operator norm for $W_z$ by
\begin{eqnarray}\label{122401}
|||{W_z}|||_{1,D(s,r)\times\Pi}: =\sup_{0\neq z^{(j)}\in \ell
^2_{b,1},1\leq j\leq h-1}
\frac{||{\lfloor{\widetilde{JW_z}}\rceil}_{D(s,r)\times\Pi}(z^{(1)},\dots,z^{(h-1)})||_
{1}} {||(z^{h-1})||_{1,1}},\end{eqnarray} and define the operator norm for $W_v$ ($v=x$ or $y$) by
\begin{eqnarray}\label{091404} |||W_{v}|||_{D(s,r)\times\Pi}:=\sup_{0\neq
z^{(j)}\in \ell ^2_{b,1},1\leq j\leq h}
\frac{||{\lfloor{\widetilde{W_{v}}}\rceil}_{D(s,r)\times\Pi}(z^{(1)},\dots,z^{(h)})||}
{||(z^{h})||_{1,1}}.
\end{eqnarray}Finally define the $p$-tame norm of the Hamiltonian vector field $X_W$ as
follows,
\begin{eqnarray}\label{051703}
|||X_W|||_{p,D(s,r,r)\times\Pi}^T =|||{W_y}|||_{D(s,r,r)\times\Pi}
+\frac1{r^2}|||{W_x}|||_{D(s,r,r)\times\Pi}
+\frac1r|||W_z|||_{p,D(s,r,r)\times \Pi}^T,
\end{eqnarray}where
\begin{eqnarray}\label{091405}
|||{W_v}|||_{D(s,r,r)\times\Pi}:=|||{W_v}|||_{D(s,r)\times\Pi}r^{h},\qquad v=x\ \mbox{or}\ y,
\end{eqnarray}
and
\begin{eqnarray}\label{091403}
|||{W_z}|||_{p,D(s,r,r)\times\Pi}^T=\max
\left\{|||{W_z}|||_{p,D(s,r)\times\Pi}^T,|||{W_z}|||_{1,D(s,r)\times\Pi}\right\}r^{h-1}.
\end{eqnarray}
\end{defn}
\begin{rem}
In view of (\ref{091402}), $\lfloor{{JW_z}}\rceil_{D(s,r)\times\Pi}$ is required as a bounded map form $\ell^{2}_{b,p}$ to $\ell^{2}_{b,p+1}$ instead of a bounded map form $\ell^{2}_{b,p}$ to $\ell^{2}_{b,p}$ as in \cite{CLY}.
\end{rem}\begin{rem}
Based on (\ref{091402}) and (\ref{091404})
 in Definition \ref{071803}, for each
$(x,y,z)\in\mathcal{P}^p$ and $\xi\in\Pi$, the following estimates hold
\begin{equation}\label{011602}
||(W_h)_z(x,y,z;\xi)||_p\leq||(W_h)_z(x,y,z;\xi)||_{p+1}\leq
|||{(W_h)_z}|||_{p,D(s,r)\times\Pi}^{T}||z||_p||z||_1^{\max\{h-2,0\}},
\end{equation}
\begin{equation}\label{012201} ||(W_h)_x(x,y,z;\xi)||\leq
|||{(W_h)_x}|||_{D(s,r)\times\Pi}||z||_1^{h},
\end{equation}
and
\begin{equation}\label{012202} ||(W_h)_y(x,y,z;\xi)||\leq
|||{(W_h)_y}|||_{D(s,r)\times\Pi}||z||_1^{h}.
\end{equation}
\end{rem}
\begin{defn}\label{080204} Let $W(x,y,z;\xi)=\sum_{h\geq
0}W_{h}(x,y,z;\xi)$ be a Hamiltonian analytic in the variable $(x,y,z)\in
D(s,r,r)$ and $C^1$-smooth in the parameter $\xi\in\Pi$, where
$$W_{h}(x,y,z;\xi)=\sum_{\beta\in\mathbb{N}^{\bar{\mathbb{Z}}},|\beta|=h}W_{h}^{\beta}(x,y;\xi)
z^{\beta}.$$ Then define the $p$-tame norm of the Hamiltonian
vector field $X_W$ by
\begin{equation}\label{102201}
|||X_W|||_{p,D(s,r,r)\times\Pi}^T:=\sum_{h\geq 0}
|||X_{W_{h}}|||_{p,D(s,r,r)\times\Pi}^T. \end{equation} Moreover, we say that
a Hamiltonian vector field $X_W$ (or a Hamiltonian $W(x,y,z;\xi)$)
has ${p}$-tame property on the domain $D(s,r,r)\times\Pi$, if and
only if $ |||X_W|||_{p,D(s,r,r)\times\Pi}^{T}<\infty. $
\end{defn}

\section{The abstract results}

As in \cite{P1}, define \begin{equation}\label{032601}
||w||_{\mathcal{P}^p,D(s,r,r)}=||x||+\frac1{r^2}||y||+\frac1{r}||z||_p
 \end{equation}for each $w=(x,y,z)\in D(s,r,r)$, and
 define the usual weighted norm of Hamiltonian vector field $X_U$ on the domain
$D(s,r,r)\times\Pi$ by
\begin{equation}\label{081101}
|||X_U|||_{\mathcal{P}^p,D(s,r,r)\times\Pi}=\sup_{(x,y,z;\xi)\in
D(s,r,r)\times\Pi}\left(||U_y||+\frac1{r^2}||U_x||+\frac1r||U_z||_{p+1}\right).
\end{equation}

\begin{thm}\label{T1}(Normal form of order 2)
Consider a perturbation of the integrable Hamiltonian
\begin{equation}\label{100803} H(x,y,q,\bar q;\xi)=N(y,q,\bar q;\xi)+R(x,y,q,\bar
q;\xi)
\end{equation}defined on the domain $D(s_0,r_0,r_0)\times\Pi$ with
$s_0,r_0\in(0,1]$, where
$$N(y,q,\bar q;\xi)=\sum_{j=1}^{n}\omega_j(\xi)y_j+\sum_{j\geq1}\Omega_j(\xi)q_j\bar
q_j$$ is a family of parameter dependent integrable Hamiltonian and
$$R(x,y,q,\bar q;\xi)=\sum_{\alpha\in\mathbb{N}^n,\beta,\gamma\in\mathbb{N}^{\mathbb{N}}}R^{\alpha\beta\gamma}(x;\xi)y^{\alpha}q^{\beta}\bar q^{\gamma}$$ is the perturbation. Suppose the tangent
frequency and normal frequency satisfy the following assumption:

$Frequency\ Asymptotics.$
\begin{equation}\label{012801}
\omega_j(\xi)=\sqrt{j^2+m+\xi_j},\qquad \mbox{for} \ 1\leq j\leq n
\end{equation}
and
\begin{equation}\label{012802}
\Omega_j(\xi)=\sqrt{(j+n)^2+m+\xi_{j+n}}\qquad \mbox{for} \ j\geq1,
\end{equation}
where $\xi=(\xi_j)_{j\geq1}\in\Pi.$
The perturbation $R(x,y,q,\bar q;\xi)$ has $p$-tame property on the
domain $D(s_0,r_0,r_0)\times\Pi$ and satisfies the small assumption:
\begin{equation*}
 \varepsilon:=|||X_{R}|||^T_{p,D(s_0,r_0,r_0)\times\Pi}\leq
 \eta^{12}\epsilon, \qquad \mbox{ for some }\ \eta\in(0,1),
\end{equation*}where $\epsilon$ is a positive constant
depending on $s_0,r_0$ and $n$. Then there exists a subset
$\Pi_{\eta}\subset\Pi$ with the estimate
\begin{equation*}
\mbox{Meas}\ \Pi_{\eta}\geq(\mbox{Meas}\ \Pi)(1-O(\eta^{1/2})).
\end{equation*}For each $\xi\in\Pi_{\eta}$, there is a symplectic map
$$\Psi: D(s_0/2,r_0/2,r_0/2)\rightarrow D(s_0,r_0,r_0),$$ such that
\begin{equation}\label{081601}\breve H(x,y,q,\bar q;\xi):=H\circ\Psi=
\breve N(y,q,\bar q;\xi)+ \breve R(x,y,q,\bar q;\xi),
\end{equation}where
\begin{equation}\label{082903}
\breve N(y,q,\bar
q;\xi)=\sum_{j=1}^{n}\breve{\omega}_j(\xi)y_j+\sum_{j\geq1}\breve{\Omega}_j(\xi)q_j\bar
q_j
\end{equation}
and
\begin{equation}\label{082902}
\breve R(x,y,q,\bar
q;\xi)=\sum_{\alpha\in\mathbb{N}^n,\beta,\gamma\in\mathbb{N}^{\mathbb{N}},2|\alpha|+|\beta|+|\gamma|\geq
3}\breve
R^{\alpha\beta\gamma}(x;\xi)y^{\alpha}q^{\beta}\bar{q}^{\gamma}.
\end{equation}
Moreover, the following estimates hold:\\
(1) for each $\xi\in\Pi_{\eta},$ the symplectic map
$\Psi:D(s_0/2,r_0/2,r_0/2)\rightarrow D(s_0,r_0,r_0)$
\color{black}satisfies
\begin{equation}\label{080101}
||\Psi-id||_{p,D(s_0/2,r_0/2,r_0/2)}\leq c\eta^6\epsilon
\end{equation}
and
\begin{equation}\label{080102}
|||D\Psi-Id|||_{p,D(s_0/2,r_0/2,r_0/2)}\leq c\eta^6\epsilon,
\end{equation}
where on the left-hand side hand we use the operator
norm\footnote{where $id$ denotes the identity map from $\mathcal
P^p\to\mathcal P^p$ and $Id$ denotes its tangent map. }
\begin{equation*}
|||D\Psi-Id|||_{p,D(s_0/2,r_0/2,r_0/2)}=\sup_{0\neq w\in
D(s_0/2,r_0/2,r_0/2) }\frac{||(D\Psi-Id) w||_{\mathcal{P}^p,D(s_0,r_0,r_0) }}{||
w||_{\mathcal{P}^p,D(s_0/2,r_0/2,r_0/2)}};
\end{equation*}
\\
\color{black} (2) the frequencies $\breve \omega(\xi)$ and $\breve
\Omega(\xi )$ satisfy
\begin{equation}\label{080103}
||\breve\omega(\xi)-\omega(\xi)||+\sup_{j\geq
1}||\partial_{\xi_j}(\breve\omega(\xi)-\omega(\xi))||\leq
{c\eta^8\epsilon}\end{equation}and
\begin{equation}\label{080104}
||\breve\Omega(\xi)-\Omega(\xi)||_{-1}+\sup_{j\geq
1}||\partial_{\xi_j}(\breve\Omega(\xi)-\Omega(\xi))||_{-1}\leq
{c{\eta^8}\epsilon},
\end{equation}
where
\begin{equation}
||w=(w_i)_{i\geq1}||_{-1}:=\sup_{i\geq1}|w_i(\xi)i|;
\end{equation}
(3) the Hamiltonian vector field $X_{\breve R}$ of the new perturbed
Hamiltonian $\breve R(x,y,q,\bar q;\xi)$ satisfies
\begin{equation}\label{080105}
 |||X_{\breve R}|||^T_{p,D(s_0/2,r_0/2,r_0/2)\times\Pi_{\eta}}\leq
 \varepsilon(1+{c\eta^6\epsilon}),
\end{equation}where $c>0$ is a constant depending on $s_0,r_0$ and $n$.
\end{thm}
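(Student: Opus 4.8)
The plan is to realise $\Psi$ as the limit of a KAM iteration which at every stage removes \emph{only} the low-order part of the perturbation and leaves the high-order part essentially untouched. This is a \emph{partial} normalisation: the remainder $\breve R$ is not driven to zero but stays comparable to $\varepsilon$, in agreement with (\ref{080105}). Assign to each monomial $y^\alpha q^\beta\bar q^\gamma$ the degree $2|\alpha|+|\beta|+|\gamma|$ and split $R=R^{\mathrm{lo}}+R^{\mathrm{hi}}$, with $R^{\mathrm{lo}}$ the part of degree $\leq 2$ and $R^{\mathrm{hi}}$ that of degree $\geq 3$. Among the degree-$\leq 2$ monomials the resonant ones — the $x$-averaged $y_j$-terms and $q_j\bar q_j$-terms — commute with $N$ and are absorbed into the corrected frequencies $\breve\omega_j,\breve\Omega_j$, which is the origin of (\ref{080103})--(\ref{080104}); every other degree-$\leq 2$ term is to be eliminated. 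The target is precisely the reduced form (\ref{081601})--(\ref{082902}), in which only degrees $\geq 3$ survive in $\breve R$.

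First I would analyse a single step. Given $H_\nu=N_\nu+R_\nu$ with low-order size $\varepsilon_\nu:=|||X_{R_\nu^{\mathrm{lo}}}|||^{T}_{p}$, solve the homological equation $\{N_\nu,F_\nu\}+R_\nu^{\mathrm{lo}}=\langle R_\nu^{\mathrm{lo}}\rangle$ for a generating Hamiltonian $F_\nu$ of degree $\leq 2$, where $\langle\cdot\rangle$ denotes the resonant (normal-form) part; in Fourier variables this is a family of scalar equations $\sqrt{-1}\langle k,\omega_\nu\rangle\,\widehat F=-\widehat R$ for the coefficients of degree $\leq 2$, so its solvability is controlled by the first and second Melnikov denominators $\langle k,\omega_\nu\rangle$, $\langle k,\omega_\nu\rangle\pm\Omega_{i,\nu}$ and $\langle k,\omega_\nu\rangle\pm\Omega_{i,\nu}\pm\Omega_{j,\nu}$. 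Putting $\Psi_\nu=\Phi^{1}_{F_\nu}$ (the time-$1$ map of $X_{F_\nu}$) and using $\{N_\nu,F_\nu\}=\langle R_\nu^{\mathrm{lo}}\rangle-R_\nu^{\mathrm{lo}}$, the Lie series yields
\begin{equation*}
H_\nu\circ\Psi_\nu=N_{\nu+1}+R_{\nu+1},\qquad N_{\nu+1}=N_\nu+\langle R_\nu^{\mathrm{lo}}\rangle,\qquad R_{\nu+1}=R_\nu^{\mathrm{hi}}+Q_\nu,
\end{equation*}
where $Q_\nu$ gathers the brackets $\{\,\cdot\,,F_\nu\}$, each carrying a factor $F_\nu=O(\varepsilon_\nu/\gamma)$. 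The decisive structural fact is the degree count $\deg\{A,B\}=\deg A+\deg B-2$: since $\deg F_\nu\leq 2$, the bracket $\{R_\nu^{\mathrm{hi}},F_\nu\}$ has degree $\geq 1$, and a short bookkeeping — the degree-$0$ error is fed only by products of low-order terms, and the induced feeding along $0\to1\to2$ is nilpotent — shows that the new low-order error contracts super-exponentially (at the usual cost of shrinking the width $s$ and radius $r$ towards $s_0/2,r_0/2$), whereas the high-order size grows only multiplicatively, $|||X_{R_{\nu+1}^{\mathrm{hi}}}|||^{T}_{p}\leq(1+c\gamma^{-1}\varepsilon_\nu)\,|||X_{R_\nu^{\mathrm{hi}}}|||^{T}_{p}$ up to a summable $O(\varepsilon_\nu^2)$ correction. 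Composing, $\Psi=\lim_\nu\Psi_1\circ\cdots\circ\Psi_\nu$ exists on $D(s_0/2,r_0/2,r_0/2)$ and obeys (\ref{080101})--(\ref{080102}); since $\sum_\nu\varepsilon_\nu$ converges, $\prod_\nu(1+c\gamma^{-1}\varepsilon_\nu)-1=O(\varepsilon/\gamma)$, and with the Melnikov gap taken as $\gamma\sim\eta^6$ this reproduces the factor $(1+c\eta^6\epsilon)$ in (\ref{080105}).

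To keep the whole iteration inside the $p$-tame class of Definition \ref{080204}, at every step I would estimate $F_\nu$ and each bracket in $Q_\nu$ in the $p$-tame norm, using the calculus of \S 5 (tameness of Poisson brackets and of the flow $\Phi^{t}_{F_\nu}$, together with the degree cut-off); this part runs parallel to \cite{CLY}. The single point at which the wave equation departs from the Schr\"odinger case is the solution of the homological equation. As remarked after Definition \ref{071803}, the modulus $\lfloor JW_z\rceil$ is here required to map $\ell^{2}_{b,p}$ into $\ell^{2}_{b,p+1}$, i.e.\ to gain one derivative. This is forced because, for the wave frequencies $\Omega_j=\sqrt{(j+n)^2+m+\xi_{j+n}}\sim j$, dividing by a second-Melnikov denominator $\langle k,\omega_\nu\rangle\pm\Omega_{i,\nu}\pm\Omega_{j,\nu}$ does not improve decay in the indices — these denominators do not grow — so one must concede a power of the index, and the extra regularity built into (\ref{091402}) is exactly what absorbs this loss when passing from $R_\nu^{\mathrm{lo}}$ to $F_\nu$.

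The step I expect to be the main obstacle, and the one to carry out in full, is the measure estimate: showing that the parameters at which any of the above Melnikov conditions fails along the iteration form a set of measure $O(\eta^{1/2})\,\mbox{Meas}\,\Pi$, so that $\mbox{Meas}\,\Pi_\eta\geq(\mbox{Meas}\,\Pi)(1-O(\eta^{1/2}))$. Unlike the Schr\"odinger case the wave frequencies are only asymptotically linear, $\Omega_j\sim j$, so the differences $\langle k,\omega\rangle\pm\Omega_i\pm\Omega_j$ have poor arithmetic separation and the naive excision over $(k,i,j)$ is not summable. I would follow \cite{Bam2011}: exploit that the choice $\xi_j\in[1,2]/j$ makes each $\Omega_j$ depend genuinely on its own parameter, with an index-uniform transversality $|\partial_{\xi_{j+n}}\Omega_j|\gtrsim j^{-1}$; restrict the dangerous resonances to bounded $|i-j|$ (in terms of $|k|$); and sum the resulting small-divisor cuts using the exponential weight $e^{|k|s}$ present in the $p$-tame norm to force convergence. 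Propagating this transversality along the perturbed frequencies $\breve\omega,\breve\Omega$ through (\ref{080103})--(\ref{080104}), and summing the excised measure over all KAM stages, is the technical heart of the argument.
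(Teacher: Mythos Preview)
Your outline is correct and matches the paper's approach: the paper does not give a self-contained proof of this theorem but states (in the Remark immediately following it) that the argument is a standard KAM iteration parallel to Theorem~2.9 of \cite{CLY}, with the $p$-tame property of $\breve R$ propagated through the iteration via Lemmas~\ref{021102}--\ref{081503}; your sketch of the low-order/high-order splitting, the homological equation, the degree bookkeeping, and the Lie-series composition is exactly this scheme.

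One point of emphasis differs. You single out the measure estimate as the main obstacle and propose to handle it \`a la \cite{Bam2011}. In the paper, however, the Bambusi-style argument is reserved for Theorem~\ref{thm7.1} (the higher-order partial normal form, where up to $\mathcal{M}+2$ normal frequencies interact); for the present Theorem~\ref{T1} the Melnikov conditions involve at most two normal modes, and the paper treats this as classical, referring to \cite{P1}. The mechanism is precisely the one you identified in your regularity discussion: the extra smoothing in (\ref{091402}) --- $\lfloor JW_z\rceil$ mapping $\ell^2_{b,p}\to\ell^2_{b,p+1}$ --- buys a factor of $ij$ on the coefficients of $q_i\bar q_j$, which is what makes the sum over the second-Melnikov indices $(i,j)$ converge despite the denominators $\langle k,\omega\rangle+\Omega_i-\Omega_j$ not growing. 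So the ``main obstacle'' here is really the $p$-tame bookkeeping (your middle paragraphs), while the measure estimate is the standard P\"oschel one; save the \cite{Bam2011} machinery for the next theorem.
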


\begin{rem}
This theorem is parallel to Theorem 2.9 in \cite{CLY} and is
essentially due to a standard KAM proof. The same as in \cite{CLY},
the tame property (\ref{080105}) of $X_{\breve R}$ can be verified
explicitly in view of Lemmas \ref{021102}-\ref{081503}. Moreover, as
a corollary of this theorem, the existence and time $\delta^{-1}$
stability can be obtained directly ($p$-tame property is not
necessary here).
\end{rem}

Starting from the normal form of order 2 obtained in Theorem
\ref{T1}, we will further construct a partial normal form of order
$\mathcal{M}+2$ through $\mathcal{M}$-times symplectic
transformations under some non-resonant conditions. To this end,
some notations are given first. Given a large
$\mathcal{N}\in\mathbb{N}$, split the normal frequency $\Omega(\xi)$
and normal variable $(q,\bar q)$ into two parts respectively, i.e.
$$\Omega(\xi)=(\tilde {\Omega}(\xi),\hat{\Omega}(\xi)),\qquad q=(\tilde
q,\hat q),\qquad \bar q=(\bar{\tilde q},\bar {\hat q}),$$ where
$$\tilde
{\Omega}(\xi)=(\Omega_1(\xi),\dots,\Omega_{\mathcal{N}}(\xi)),\quad
\tilde q=(q_1,\dots,q_{\mathcal{N}}),\quad \bar{\tilde q}=(\bar
q_{1},\dots,\bar q_{\mathcal{N}})$$ are the low frequencies  and
$$
\hat{\Omega}(\xi)=(\Omega_{\mathcal{N}+1}(\xi),\Omega_{\mathcal{N}+2}(\xi),\dots),\quad
\hat q=(q_{\mathcal{N}+1},q_{\mathcal{N}+2},\dots),\quad \bar{\hat
q}=(\bar q_{\mathcal{N}+1},\bar q_{\mathcal{N}+2},\dots)$$
 are the high frequencies. Given $0<\tilde \eta<1$, and $\tau>
 2n+5$, define the
resonant sets $\mathcal{R}_{k\tilde l\hat l}$ by
\begin{equation}\label{091202}
\mathcal{R}_{k\tilde{l}\hat{l}}=\left\{\xi\in\Pi_{\eta}:\left|\langle
k,\breve\omega(\xi)\rangle+\langle \tilde
l,\tilde{\Omega}(\xi)\rangle+\langle
\hat{l},\hat{\Omega}(\xi)\rangle\right|\leq\frac{\tilde\eta}{4^{3\mathcal{M}}(|
k|+1)^{\tau}C(\mathcal{N},\tilde l)}\right\}.
\end{equation}
Let
\begin{equation}\label{091208}
\mathcal{R}=\bigcup_{|k|+|\tilde l|+|\hat l|\neq 0, |\tilde l|+|\hat
l|\leq \mathcal{M}+2, |\hat l|\leq 2}\mathcal{R}_{k\tilde{l}\hat{l}}
\end{equation}and
\begin{equation}\label{091215}
\tilde\Pi_{\tilde\eta}=\Pi_{\eta}\setminus\mathcal{R},
\end{equation}
where $\Pi_{\eta}$ is defined in Theorem \ref{T1}.


\begin{thm}\label{thm7.1} (Partial normal form of order $\mathcal{M}+2$)
Consider the normal form of order 2 $$\breve H(x,y,q,\bar
q;\xi)=\breve N(y,q,\bar q;\xi)+\breve R(x,y,q,\bar q;\xi)$$
obtained in Theorem \ref{T1}. Given any positive integer
$\mathcal{M}$ and $0<\tilde \eta<1$, there exist a small $\rho_0>0$
and a large positive integer $\mathcal{N}_0$ depending on
$s_0,r_0,n$ and $\mathcal{M}$, such that for each $0<\rho<\rho_0$, any
integer $\mathcal{N}$ satisfying
\begin{equation}\label{091113}\mathcal{N}_0<\mathcal{N}<\left(\frac{\tilde\eta^2}{2\rho}\right)^{\frac{1}{6(\mathcal{M}+7)^2}}, \end{equation}
the non-resonant set $\tilde\Pi_{\tilde\eta}$ fulfills the estimate
\begin{equation}\label{091301} Meas\ \tilde\Pi_{\tilde\eta} \geq(Meas\ \Pi_{\eta})(1-c\tilde\eta^{1/2}),
\end{equation}
and for any $\xi\in\tilde\Pi_{\tilde\eta}$, there is a symplectic
map
$$\Phi: D(s_0/4,4\rho,4\rho)\rightarrow D(s_0/2,5\rho,5\rho),$$ such
that
\begin{equation}\label{020302}\breve{\breve{H}}(x,y,q,\bar q;\xi):=
\breve H\circ\Phi={\breve N}(y,q,\bar q;\xi)+{Z}(y,q,\bar q;\xi)+{
P}(x,y,q,\bar q;\xi)+{ Q}(x,y,q,\bar q;\xi)
\end{equation}
is a partial normal form of order $\mathcal{M}+2$, where
\begin{eqnarray*}
{ Z}(y,q,\bar q;\xi)&=&\sum_{4\leq 2|\alpha|+2|\beta|+2|\mu|\leq
\mathcal{M}+2,|\mu|\leq1}{
Z}^{\alpha\beta\beta\mu\mu}(\xi)y^{\alpha}\tilde{q}^{\beta}\bar{\tilde
q}^{\beta}\hat{q}^{\mu}\bar{\hat{q}}^{\mu}
\end{eqnarray*}is the integrable term depending only on $y$ and $I_j=|q_j|^2,j\geq1$, and where
\begin{eqnarray*}  {
P}(x,y,q,\bar
q;\xi)&=&\sum_{2|\alpha|+|\beta|+|\gamma|+|\mu|+|\nu|\geq
\mathcal{M}+3,|\mu|+|\nu|\leq 2} { P}^{\alpha
\beta\gamma\mu\nu}(x;\xi)y^{\alpha}{\tilde q}^{\beta}{\bar{\tilde
q}}^{\gamma}{\hat q}^{\mu}{\bar{\hat q}^{\nu}}
\end{eqnarray*}and
\begin{equation*}
{ Q}(x,y,q,\bar q;\xi)=\sum_{|\mu|+|\nu|\geq 3}{ Q}^{\alpha
\beta\gamma\mu\nu}(x;\xi)y^{\alpha}{\tilde q}^{\beta}{\bar{\tilde
q}}^{\gamma}{\hat q}^{\mu}{\bar{\hat q}}^{\nu}.
\end{equation*}
Moreover, we have the following estimates:\\
(1) the symplectic map $\Phi$ satisfies
\begin{equation}\label{091812}
||\Phi-id||_{p,D(s_0/4,4\rho,4\rho)}\leq \frac{c\mathcal{N}^{294}\rho}{\tilde\eta^2}
\end{equation}
and
\begin{equation}
|||D\Phi-Id|||_{p,D(s_0/4,4\rho,4\rho)}\leq \frac{c\mathcal{N}^{294}}{\tilde\eta^2};
\end{equation}
\\
(2) the Hamiltonian vector fields $X_Z,X_P$ and $X_Q$ satisfy
\begin{equation*}
|||X_{{ Z}}|||^T_{p,D(s_0/4,4\rho,4\rho)\times\Pi_{\eta}}\leq
c\rho
\left(\frac1{\tilde\eta^2}\mathcal{N}^{6(\mathcal{M}+6)^2}\rho\right),
\end{equation*}
\begin{equation}\label{091304}
|||X_{{ P}}|||^T_{p,D(s_0/4,4\rho,4\rho)\times\Pi_{\eta}}\leq
c
\rho\left(\frac1{\tilde\eta^2}\mathcal{N}^{6(\mathcal{M}+7)^2}\rho\right)^{\mathcal{M}}\qquad
 \end{equation}and
\begin{equation*}
|||X_{{ Q}}|||^T_{p,D(s_0/4,4\rho,4\rho)\times\Pi_{\eta}}\leq
c \rho,
\end{equation*}
where $c>0$ is a constant depending on $s_0,r_0,n$ and
$\mathcal{M}$.
\end{thm}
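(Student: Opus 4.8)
The plan is to construct $\Phi$ as a composition $\Phi = \Phi_1\circ\Phi_2\circ\cdots\circ\Phi_{\mathcal{M}}$ of $\mathcal{M}$ successive symplectic transformations, each one the time-one flow of a Hamiltonian vector field $X_{F_m}$, removing at step $m$ the non-integrable monomials of total degree $m+2$ (counting $y$ with weight $2$) that are not of the tolerated forms appearing in $Z$, $P$, $Q$. First I would rescale: on the $\delta$-neighborhood one works with radius $\rho\ll r_0/2$, so after restricting $\breve H$ from $D(s_0/2,r_0/2,r_0/2)$ to $D(s_0/2,5\rho,5\rho)$ the perturbation $\breve R$ is small of size $O(\rho)$ in the $p$-tame norm (this gain in $\rho$ is what makes the scheme converge in finitely many steps). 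Then I would set up the homological equation at step $m$: writing $\breve N = \langle\breve\omega,y\rangle + \langle\breve\Omega,q\bar q\rangle$, the equation $\{\breve N, F_m\} + (\text{non-integrable part of degree }m+2) = 0$ is solved coefficient-by-coefficient in Fourier–Taylor modes, with the small divisor $\langle k,\breve\omega(\xi)\rangle + \langle \tilde l,\tilde\Omega(\xi)\rangle + \langle\hat l,\hat\Omega(\xi)\rangle$ for $|k|+|\tilde l|+|\hat l|\neq 0$, $|\tilde l|+|\hat l|\le \mathcal{M}+2$, $|\hat l|\le 2$. On $\tilde\Pi_{\tilde\eta} = \Pi_\eta\setminus\mathcal{R}$ this divisor is bounded below by $\tilde\eta/(4^{3\mathcal{M}}(|k|+1)^\tau C(\mathcal{N},\tilde l))$, which gives the solvability and the quantitative bound on $F_m$; terms with $|\hat l|\ge 3$ (collected into $Q$) and terms with $|k|=0$, $\tilde l=\gamma-\beta$, $\hat l = \nu - \mu$, $|\mu|=|\nu|\le 1$ (collected into $Z$) are simply not touched.

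The bookkeeping of norms is the technical heart. At each step I would invoke the $p$-tame calculus of \S 5 (the Lemmas \ref{021102}--\ref{081503} alluded to in the remarks): the solution $F_m$ of the homological equation satisfies a $p$-tame estimate in which one loses a factor $\tilde\eta^{-2}$ and a polynomial power of $\mathcal{N}$ from the small divisors (the $C(\mathcal{N},\tilde l)$ factor and the $e^{|k|s}$ summation) and a geometric loss $s_0\mapsto s_0 - $ (something) in the analyticity width and $\rho$-radius shrinkage; the Poisson-bracket and flow estimates for $p$-tame vector fields then control $X_{\breve H\circ\Phi_m}$ in terms of $X_{\breve H}$ with a multiplicative factor of the form $1 + c\,\mathcal{N}^{a}\rho/\tilde\eta^2$. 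The point of the upper bound $\mathcal{N} < (\tilde\eta^2/2\rho)^{1/(6(\mathcal{M}+7)^2)}$ in \eqref{091113} is precisely that this factor stays $<2$, so after $\mathcal{M}$ compositions all the constants remain under control and one reads off the exponents $294$, $6(\mathcal{M}+6)^2$, $6(\mathcal{M}+7)^2$ claimed in the statement: each transformation contributes a bounded power of $\mathcal{N}/\tilde\eta$, the $\mathcal{M}$-fold iteration exponentiates the leading $\rho$ so that $X_P$ is genuinely of order $\rho\,(\mathcal{N}^{6(\mathcal{M}+7)^2}\rho/\tilde\eta^2)^{\mathcal{M}}$, while $X_Z$ and $X_Q$ pick up only the first-step power. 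Tracking these exponents carefully (and checking the analyticity domains shrink only from $s_0/2$ to $s_0/4$ and the radii from $5\rho$ to $4\rho$ over all $\mathcal{M}$ steps) is routine but lengthy; I would do the inductive step once in full and then assert the conclusion by iteration, exactly as the paper signals it will do.

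The measure estimate \eqref{091301} is carried out separately and, per the introduction, is the one piece written out in detail because the NLW frequencies $\omega_j(\xi) = \sqrt{j^2+m+\xi_j}$ are worse than in the NLS case. The plan there is: for each fixed $(k,\tilde l,\hat l)$ with $|k|+|\tilde l|+|\hat l|\ne 0$, $|\tilde l|+|\hat l|\le\mathcal{M}+2$, $|\hat l|\le 2$, estimate $\mathrm{Meas}\,\mathcal{R}_{k\tilde l\hat l}$ by showing the resonant function $\xi\mapsto\langle k,\breve\omega\rangle + \langle\tilde l,\tilde\Omega\rangle + \langle\hat l,\hat\Omega\rangle$ has a derivative in a suitable direction $\xi_{j_0}$ bounded below (using that $\partial_{\xi_j}\sqrt{j^2+m+\xi_j} = \tfrac12(j^2+m+\xi_j)^{-1/2}$ is comparable to $1/j$, together with the $C^1$-closeness \eqref{080103}--\eqref{080104} of $\breve\omega,\breve\Omega$ to $\omega,\Omega$), so that $\mathrm{Meas}\,\mathcal{R}_{k\tilde l\hat l} \le c\,\tilde\eta/((|k|+1)^\tau C(\mathcal{N},\tilde l))$ times the measure of a one-dimensional slice. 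The main obstacle — and the reason for the detailed treatment following \cite{Bam2011} — is handling the high-index indices in $\hat l$: when $|\hat l|\le 2$ the two high modes $\Omega_{l_1},\Omega_{l_2}$ with large $l_1,l_2$ can nearly cancel ($\Omega_{l_1}-\Omega_{l_2}$ is small when $l_1$ is close to $l_2$), so one must argue that either the combination has a good lower bound for free (when $|l_1 - l_2|$ or $\langle k,\breve\omega\rangle + \langle\tilde l,\tilde\Omega\rangle$ forces it away from zero) or else the derivative argument in a low-index $\xi_{j}$ applies; summing the resulting bounds over all admissible $(k,\tilde l,\hat l)$, the series $\sum_k (|k|+1)^{-\tau}$ converges because $\tau > 2n+5 > n$, and the constant $C(\mathcal{N},\tilde l)$ is chosen large enough (polynomial in $\mathcal{N}$) to absorb the finitely many choices of $\tilde l$ with $|\tilde l|\le\mathcal{M}+2$ among $\mathcal{N}$ low modes. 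This yields $\mathrm{Meas}\,\mathcal{R}\le c\,\tilde\eta^{1/2}\,\mathrm{Meas}\,\Pi_\eta$ and hence \eqref{091301}.
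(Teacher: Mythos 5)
Your proposal is correct and follows essentially the same route as the paper: the normal form construction itself is the standard $\mathcal{M}$-step iteration via time-one flows and homological equations with the $p$-tame calculus of \S 5 (which the paper does not write out, deferring to Theorem 5.1 of \cite{CLY}), and the measure estimate is done exactly as you describe --- a transversality argument in a chosen parameter direction for $|k|\neq 0$ and for $|\tilde l|\neq 0$, emptiness when only $\hat l$ is nonzero, and the \cite{Bam2011}-style treatment of the near-cancellations $\breve\Omega_i-\breve\Omega_j\approx i-j$ in the $\hat l=e_i-e_j$ case, which is the source of the $\tilde\eta^{1/2}$ loss. No gaps beyond the level of detail the paper itself omits.
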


\begin{proof}
In this proof, Lemmas \ref{021102}-\ref{081503} are used, and the
normal form iterative procedure is the same as Theorem 5.1 in
\cite{CLY}. Thus, we prove the measure estimate (\ref{091301}) in
detail while omit the other parts of proof. Firstly, we will
estimate the measure of the resonant sets $\mathcal{R}_{k\tilde
l\hat l}$.
\\$\textbf{Case 1.}$\\
For $|k|\neq0$, without loss of generality, we assume
\begin{equation}\label{091206}
|k_1|=\max_{1\leq i\leq n}\{|k_1|,\dots,|k_n|\}. \end{equation} Then
\begin{eqnarray*}
&&|\partial_{\xi_1}(\langle k,\breve\omega(\xi)\rangle+\langle
\tilde
l,\tilde{\Omega}(\xi)\rangle+\langle\hat l,\hat{\Omega}(\xi)\rangle)|\\
&\geq&
|k_1||\partial_{\xi_1}\breve\omega_1(\xi)|-|\partial_{\xi_1}(\sum_{i=2}^nk_i\breve\omega_i(\xi)+\langle
\tilde l,\tilde{\Omega}(\xi)\rangle+\langle\hat l,\hat{\Omega}(\xi)\rangle)|\\
&\geq&|k_1|(1-c\eta^8\epsilon)-(\sum_{i=2}^n|k_i|+|\tilde l|+|\hat l|)c\eta^{8}\epsilon\qquad \mbox{(in view of (\ref{012801}), (\ref{012802}), (\ref{080103}) and (\ref{080104}))}\\
&\geq&|k_1|-(|k|+\mathcal{M}+2)c\eta^{8}\epsilon\qquad \mbox{(in
view of $|\tilde l|+|\hat l|\leq \mathcal{M}+2$)}\\&\geq&
\frac14|k_1|\qquad\qquad\qquad \mbox{(by (\ref{091206}) and
$\mathcal{M}\leq(2c\eta^{8}\epsilon)^{-1})$}\\
&\geq&\frac14.
\end{eqnarray*}
Hence,
\begin{equation}\label{101102}
Meas\ \mathcal{R}_{k\tilde l\hat
l}\leq\frac{4\tilde\eta}{4^{3\mathcal{M}}(|
k|+1)^{\tau}C(\mathcal{N},\tilde l)}\cdot Meas\ \Pi_{\eta}.
\end{equation}
$\textbf{Case 2.}$ \\
If $|k|=0$ and $|\tilde l|\neq0$, without loss of generality, we
assume \begin{equation*}|\tilde {l}_j|\neq 0
\end{equation*}
and
\begin{equation*}
\tilde{l}_i=0,\qquad 1\leq i\leq j-1.
\end{equation*}
Then
\begin{eqnarray*}
&&|\partial_{\xi_{n+j}}(\langle k,\breve\omega(\xi)\rangle+\langle
\tilde
l,\tilde{\Omega}(\xi)\rangle+\langle\hat l,\hat{\Omega}(\xi)\rangle)|\\
&\geq& |\tilde
l_j||\partial_{\xi_{n+j}}\tilde{\Omega}_j(\xi)|-|\partial_{\xi_{n+j}}(\langle
\tilde l,\tilde{\Omega}(\xi)\rangle+\langle\hat l,\hat{\Omega}(\xi)\rangle-\tilde l_j\tilde{\Omega}_j(\xi))|\\
&\geq&\left(\sqrt{(j+n)^2+m+\xi_j}\right)^{-1}\left(|\tilde l_j|(1-c\eta^8\epsilon)-\left(\sum_{i=j+1}^{\mathcal{N}}|\tilde l_i|+|\hat l|\right)c\eta^8\epsilon\right)\qquad\\
&& \mbox{(by (\ref{012802}) and (\ref{080104}))}\\
&\geq&\left(\sqrt{(j+n)^2+m+\xi_j}\right)^{-1}\left(|\tilde l_j|-(\mathcal{M}+2)c\eta^8\epsilon\right)\qquad\mbox{(in view of $|\tilde l|+|\hat l|\leq\mathcal{M}+2$)}\\
&\geq& \frac1{\mathcal{N}}|\tilde l_j|\qquad\qquad\qquad\qquad\qquad
\mbox{(in
view of $\mathcal{M}\leq(2c\eta^{8}\epsilon)^{-1}$ and $j\leq \mathcal{N}$)}\\
&\geq&\frac1{\mathcal{N}}.
\end{eqnarray*}
Hence,
\begin{equation}\label{101103}
Meas\ \mathcal{R}_{0\tilde l\hat
l}\leq\frac{4\tilde\eta\mathcal{N}}{4^{3\mathcal{M}}C(\mathcal{N},\tilde
l)}\cdot Meas \ \Pi_{\eta}.
\end{equation}
$\textbf{Case 3.}$ \\If $|k|=0,|\tilde l|=0$ and
$1\leq|\hat{l}|\leq2$, then it is easy to see that $ |\langle \hat
l, \hat \Omega(\xi)\rangle|$ is not small, i.e.
\begin{equation}\label{101104}\mbox{the sets $\mathcal{R}_{k\tilde
l\hat l}$ are empty for $|k|=0,|\tilde l|=0$ and
$1\leq|\hat{l}|\leq2$.}\end{equation}

Now we would like to estimate the measure of $\mathcal{R}$ (see
(\ref{091208})). Following the notations in \cite{Bam2011}, we
define the set
\begin{equation*}
\mathcal{Z}_{n,\mathcal{N}}:=\left\{(k,\tilde l,\hat
l)\in\mathbb{Z}^n\times\mathbb{Z}^{\mathcal{N}}\times\mathbb{Z}^{\mathbb{N}}\setminus(0,0,0):|\hat
l|\leq 2\right\},
\end{equation*}
and we split
\begin{equation*}
\mathcal{L}:=\left\{\hat l\in\mathbb{Z}^{\mathbb{N}}:|\hat l|\leq
2\right\}
\end{equation*}
as the union of the following four disjoint sets
\begin{eqnarray*}
\mathcal{L}_0&=&\{\hat l=0\},\\
\mathcal{L}_1&=&\{\hat l=e_j\},\\
\mathcal{L}_{2+}&=&\{\hat l=e_i+e_j\},\\
\mathcal{L}_{2-}&=&\{\hat l=e_i-e_j,i\neq j\},
\end{eqnarray*}
where
\begin{equation*}
e_j=(0,\dots,0,\overbrace{1}^{j-th},0,\dots,)
\end{equation*}
and $i,j\geq n+\mathcal{N}+1$.

Let $|\hat l|=2$ and $\hat{l}=e_i+e_j\in{\mathcal{L}}_{2+}$ for some
$i,j\geq n+\mathcal{N}+1$. If $$\min\{i,j\}\geq
|k|\cdot||\breve{\omega}({\xi})||+2(\mathcal{M}+2)\mathcal{N}+1,$$
 then it is easy to see that
\begin{equation*}
\left|\langle k,\breve\omega(\xi)\rangle+\langle \tilde
l,\tilde{\Omega}(\xi)\rangle+\langle
\hat{l},\hat{\Omega}(\xi)\rangle\right|\geq 1,
\end{equation*}which is not small. Namely, the resonant sets $\mathcal{R}_{k\tilde l\hat l}$ is empty. So it is sufficient to consider $$\max\{i,j\}< |k|\cdot||\breve{\omega}({\xi})||+2(\mathcal{M}+2)\mathcal{N}+1,$$
when the estimate (\ref{022418}) is given below. In fact, we obtain
\begin{eqnarray}
&&\nonumber Meas \bigcup_{(k,\tilde l,\hat l)\in
\mathcal{Z}_{n,\mathcal{N}}\bigcap{\mathcal{L}}_{2+}}\mathcal{R}_{k\tilde
l\hat l}\\&\leq& \nonumber\sum_{k\neq 0, (k,\tilde l,\hat l)\in
\mathcal{Z}_{n,\mathcal{N}}\bigcap{\mathcal{L}}_{2+}}
\frac{4\tilde\eta}{4^{3\mathcal{M}}(|
k|+1)^{\tau}C(\mathcal{N},\tilde l)}\cdot Meas\ \Pi_{\eta}\\
&&\nonumber+\sum_{k=0,(k,\tilde l,\hat l)\in
\mathcal{Z}_{n,\mathcal{N}}\bigcap{\mathcal{L}}_{2+}}
\frac{4\tilde\eta\mathcal{N}}{4^{3\mathcal{M}}C(\mathcal{N},\tilde
l)}\cdot Meas \ \Pi_{\eta}\\&\leq &c\tilde \eta \cdot Meas \
\Pi_{\eta},\label{022418}
\end{eqnarray}where $c_1>0$ is a constant depending on $n$ and $\tau.$

Similarly we obtain
\begin{equation}
Meas \bigcup_{(k,\tilde l,\hat l)\in
\mathcal{Z}_{n,\mathcal{N}}\bigcap{\mathcal{L}}_{0}}\mathcal{R}_{k\tilde
l\hat l}\leq c_2\tilde\eta \cdot Meas \ \Pi_{\eta}
\end{equation}
and
\begin{equation}
Meas \bigcup_{(k,\tilde l,\hat l)\in
\mathcal{Z}_{n,\mathcal{N}}\bigcap{\mathcal{L}}_{1}}\mathcal{R}_{k\tilde
l\hat l}\leq c_2\tilde\eta \cdot Meas \ \Pi_{\eta},
\end{equation}
where $c_2>0$ is a constant depending on $n$ and $\tau$. Now let
\begin{equation*}
(k,\tilde l,\hat l)\in
\mathcal{Z}_{n,\mathcal{N}}\bigcap{\mathcal{L}}_{2-},
\end{equation*}
and assume $i>j$ without loss generality. In view of (\ref{012802})
and (\ref{080104}), there is a constant $C>0$ such that
\begin{equation*}
\left|\frac{\breve \Omega_i(\xi)-\breve
\Omega_j(\xi)}{i-j}-1\right|\leq \frac C{j}.
\end{equation*}
Hence,
\begin{equation*}
\langle\hat
l,\hat\Omega(\xi)\rangle=\breve\Omega_i(\xi)-\breve\Omega_j(\xi)=i-j+r_{ij},
\end{equation*}
with
\begin{equation*}
|r_{ij}|\leq \frac {Cm}{j}
\end{equation*}
and $m=i-j$. Then we have
\begin{equation*}
\left|\langle k,\breve\omega(\xi)\rangle+\langle \tilde
l,\tilde\Omega(\xi)\rangle+\langle\hat
l,\hat\Omega(\xi)\rangle\right|\geq \left|\langle
k,\breve\omega(\xi)\rangle+\langle \tilde
l,\tilde\Omega(\xi)\rangle+m\right|-|r_{ij}|.
\end{equation*}
Therefore,
\begin{equation*}
\mathcal{R}_{k\tilde l\hat l}\subset \mathcal{Q}_{k\tilde l
mj}:=\left\{\left|\langle k,\breve\omega(\xi)\rangle+\langle \tilde
l,\tilde\Omega(\xi)\rangle+m\right|\leq
\frac{\tilde\eta}{4^{3\mathcal{M}}(|
k|+1)^{\tau}C(\mathcal{N},\tilde l)}+\frac{Cm}j\right\}.
\end{equation*}
For $j\geq j_0$, we have
\begin{equation*}
\mathcal{Q}_{k\tilde l mj}\subset \mathcal{Q}_{k\tilde l mj_0}.
\end{equation*}
Then it is sufficient to consider
\begin{equation*}
m\leq
|k|\cdot||\breve{\omega}({\xi})||+2(\mathcal{M}+2)\mathcal{N}+1,
\end{equation*}
and let
\begin{equation*}
j_0=\tilde
\eta^{-1/2}4^{\mathcal{M}}(|k|+1)^{\tau/2}C(\mathcal{N},\tilde
l)^{1/2}.
\end{equation*}
Then following the proof of Lemma 5 in \cite{Bam2011}, we obtain
\begin{equation}\label{022419}
Meas\ \bigcup_{(k,\tilde l,\hat l)\in
\mathcal{Z}_{n,\mathcal{N}}\bigcap{\mathcal{L}}_{2-}}\mathcal{R}_{k\tilde
l\hat l}\leq c_3\tilde\eta^{1/2} \cdot Meas\ \Pi_{\eta},
\end{equation}
where $c_3>0$ is a constant depending on $n$ and $\tau$. Finally, in
view of (\ref{022418})-(\ref{022419}) and (\ref{091208}), we obtain
\begin{equation}\label{022420}
Meas\ \mathcal{R} \leq c\tilde\eta^{1/2} \cdot Meas\ \Pi_{\eta},
\end{equation}
where $c$ is a constant depending on $n$ and $\tau$. Then combining
(\ref{091215}) with (\ref{022420}), we finish the proof of
(\ref{091301}).
\end{proof}

Based on the partial normal form of order $\mathcal{M}+2$ and
$p$-tame property, we obtain the long time stability of KAM tori as
follows.
\begin{thm}\label{T3}(The long time stability of KAM tori) Based on the partial normal form (\ref{020302}), for any $p\geq
24(\mathcal{M}+7)^{4}+1$ and $0<\delta<\rho$, the KAM tori ${\mathcal T}$ are stable in
long time, i.e. if $w(t)$ is a solution of Hamiltonian vector field
$X_{{H}}$ with the initial datum $w(0)=( w_x(0), w_y(0),
w_q(0),w_{\bar q}(0))$ satisfying
\begin{equation*}
d_{p}( w(0),{\mathcal{T}})\leq \delta,
\end{equation*}
then
\begin{equation}\label{111905}
d_{p}(w(t),{\mathcal{T}})\leq 2\delta,
\qquad\mbox{for all}\ |t|\leq \delta^ {-\mathcal{M}}.
\end{equation}
\end{thm}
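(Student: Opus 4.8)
The plan is to run a standard a priori estimate / bootstrap argument on the partial normal form $\breve{\breve H} = \breve N + Z + P + Q$ obtained in Theorem \ref{thm7.1}, using the conserved quantity naturally attached to the KAM torus. First I would introduce the "super-action"
\[
A(w) := \sum_{j\geq 1}\big(|q_j|^2 + |\bar q_j|^2\big)\, j^{2p},
\]
so that, up to the torus coordinates $(x,y)$ and a shift of the $n$ tangential actions, $\sqrt{A(w)}$ is comparable to the distance $d_p(w,\mathcal T)$ in $\mathcal P^p$. Since $\breve N$ and $Z$ depend on $(q,\bar q)$ only through the actions $I_j=|q_j|^2$, they Poisson-commute with $A$: $\{\breve N, A\} = \{Z,A\}=0$. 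Hence along a solution
\[
\frac{d}{dt} A(w(t)) = \{P, A\}(w(t)) + \{Q, A\}(w(t)),
\]
and the whole game is to show the right-hand side is $O(\delta^{\mathcal M+2})$ as long as $d_p(w,\mathcal T)\lesssim\delta$, so that $A$ can change by at most $\delta^{\mathcal M+2}\cdot\delta^{-\mathcal M} = \delta^{2}\ll \delta^2$ over the time interval $|t|\le\delta^{-\mathcal M}$, which (after pulling back through $\Phi$ and $\Psi$, both $O(\delta)$-close to the identity by (\ref{091812}) and (\ref{080101})) keeps $d_p(w(t),\mathcal T)\le 2\delta$.

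The two terms on the right are controlled differently. For $P$: by (\ref{091304}) the vector field $X_P$ has $p$-tame norm $\le c\rho(\tilde\eta^{-2}\mathcal N^{6(\mathcal M+7)^2}\rho)^{\mathcal M}$ on $D(s_0/4,4\rho,4\rho)$, and $P$ contains only monomials with $2|\alpha|+|\beta|+|\gamma|+|\mu|+|\nu|\ge \mathcal M+3$; using the $p$-tame estimates (\ref{011602})–(\ref{012202}) to bound $\|(X_P)\|$ at a point with $d_p(w,\mathcal T)\le\delta$, each such monomial loses at least $\delta^{\mathcal M+2}$ relative to the $\rho$-scale (one factor being "free" as it is differentiated in the Poisson bracket), so that $|\{P,A\}(w)|\lesssim \rho\,(\ldots)^{\mathcal M}\,\delta^{\mathcal M+2}/\rho^{\mathcal M+?}$; choosing $\delta<\rho$ and using (\ref{091113}) to absorb the $\mathcal N$ and $\tilde\eta$ powers shows this is $\le \delta^{\mathcal M+2}$. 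For $Q$: although $|||X_Q|||^T_{p}\le c\rho$ is only $O(\rho)$, $Q$ collects the monomials with $|\mu|+|\nu|\ge 3$, i.e. at least three high-mode factors $\hat q,\bar{\hat q}$ with index $>n+\mathcal N$; here the regularity $p\ge 24(\mathcal M+7)^4+1$ enters decisively — each high factor carries a weight $\gtrsim \mathcal N^{p}$ in $\ell^2_p$ while the $p$-tame norm only asks for $\ell^2_{p+1}$ control, so bounding $\{Q,A\}$ in terms of $d_p(w,\mathcal T)$ and the weight gap produces a gain $\mathcal N^{-(\text{large})}$; combined with (\ref{091113}) (which forces $\mathcal N$ large relative to $\tilde\eta^2/\rho$) and $\delta<\rho$, this term is again $\le\delta^{\mathcal M+2}$.

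I expect the main obstacle to be the bookkeeping in the $Q$-estimate: one must convert "at least three factors from the high-frequency tail" into an honest quantitative smallness, which requires trading the high index $j>n+\mathcal N$ for powers of $\mathcal N$, and this is exactly where the quartic lower bound $p\ge 24(\mathcal M+7)^4+1$ is calibrated against the exponents $6(\mathcal M+7)^2$ appearing in (\ref{091113}) and in the norm bounds of Theorem \ref{thm7.1}. A secondary technical point is the bootstrap/continuity argument itself: one defines $T^* := \sup\{t>0: d_p(w(s),\mathcal T)\le 2\delta\ \forall\, 0\le s\le t\}$, shows on $[0,T^*]$ that $w(s)$ stays in the domain $D(s_0/4,4\rho,4\rho)$ where all the estimates of Theorem \ref{thm7.1} hold, derives $d_p(w(t),\mathcal T)^2 \le d_p(w(0),\mathcal T)^2 + C\delta^{\mathcal M+2}|t| \le \delta^2 + C\delta^{2}$ for $|t|\le\delta^{-\mathcal M}$, and then uses $\delta<\delta_0$ small to strictly improve the bound and conclude $T^*\ge\delta^{-\mathcal M}$. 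The time-reversal $t\mapsto -t$ handles negative times identically. Throughout, the passage between the conjugated Hamiltonian's phase-space distance and the original $d_p(\cdot,\mathcal T)$ is harmless because $\Psi\circ\Phi = \mathrm{id}+O(\delta)$ in the $p$-norm by (\ref{080101}) and (\ref{091812}).
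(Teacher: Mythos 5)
Your strategy is essentially the one the paper uses: reduce the stability claim to showing that on the $\delta$-neighborhood $D(s_0/4,4\delta,4\delta)$ the vector fields $X_P$ and $X_Q$ are of size $O(\delta^{\mathcal M+1/2})$ in the weighted norm (\ref{081101}), the integrable part $\breve N+Z$ being harmless because it depends only on $y$ and the actions $|q_j|^2$. Your super-action $A$ is a cosmetic repackaging of this: the paper instead bounds $|||X_P|||_{\mathcal P^p}$ and $|||X_Q|||_{\mathcal P^p}$ directly via Lemma \ref{012002} and the tame estimates (\ref{011602})--(\ref{012202}) and then integrates the equations of motion, which also takes care of the drift in $y$ (in your version you would still need the companion estimate coming from $\dot y=-\partial_x(P+Q)$, since $A$ sees only the normal components). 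The two mechanisms you identify --- the bound (\ref{091304}) calibrated against $p\geq 24(\mathcal M+7)^4+1$ for $P$, and the three high-mode factors of $Q$ traded for powers of $\mathcal N^{-(p-1)}$ --- are exactly the paper's.

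The one point you leave genuinely open, and slightly misstate, is the choice of $\mathcal N$. You invoke (\ref{091113}) as ``forcing $\mathcal N$ large'', but (\ref{091113}) is an \emph{upper} bound on $\mathcal N$; nothing in Theorem \ref{thm7.1} forces $\mathcal N$ to grow as $\delta\to0$. The proof hinges on fixing $\mathcal N$ explicitly as a function of $\delta$, namely $\delta^{-(\mathcal M+1)/(p-1)}\leq\mathcal N+1<\delta^{-(\mathcal M+1)/(p-1)}+1$ as in (\ref{091302}). This single choice does both jobs at once: it gives $(\mathcal N+1)^{-(p-1)}\leq\delta^{\mathcal M+1}$, which is the quantitative version of your ``weight gap'' for $Q$ (see (\ref{091306})); and, because $p-1\geq 24(\mathcal M+7)^4$, it keeps $\mathcal N^{6(\mathcal M+7)^2\mathcal M(\mathcal M+1)/(p-1)}\leq\delta^{-1/4}$, so the prefactor in (\ref{091304}) does not destroy the $\delta^{\mathcal M+1}$ in the $P$-estimate. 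Without pinning down $\mathcal N$ this way (and checking that it lies in the admissible window (\ref{091113}), which in turn constrains $\tilde\eta$ in terms of $\delta$), neither of your two bounds closes; with it, your bootstrap goes through as you describe.
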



\begin{proof}Take $\delta<\rho$ and $\delta^{1/10}<\tilde\eta<1$. Let
\begin{equation}\label{091302}
\delta^{-\frac{\mathcal{M}+1}{p-1}}\leq\mathcal{N}+1<\delta^{-\frac{\mathcal{M}+1}{p-1}}+1.
\end{equation}Then based on Theorem \ref{thm7.1}, we obtain a partial normal form of order $\mathcal{M}+2$
\begin{equation*}\breve{\breve{H}}(x,y,q,\bar q;\xi):=
\breve H\circ\Phi={\breve N}(y,q,\bar q;\xi)+{Z}(y,q,\bar q;\xi)+{
P}(x,y,q,\bar q;\xi)+{ Q}(x,y,q,\bar q;\xi).
\end{equation*}
To obtain the estimate (\ref{111905}), it is sufficient to prove that
\begin{equation}\label{091308'}
|||X_{{
P}}|||_{\mathcal{P}^p,D(s_0/4,4\delta,4\delta)\times\Pi_{\eta}}\leq
\delta^{\mathcal{M}+\frac12}
 \end{equation}
 and
\begin{equation}\label{091308}
|||X_{{
Q}}|||_{\mathcal{P}^p,D(s_0/4,4\delta,4\delta)\times\Pi_{\eta}}\leq
\delta^{\mathcal{M}+\frac12}.
 \end{equation}
By a direct calculation,
\begin{eqnarray}
&&\nonumber c\delta\left(\frac1{\tilde\eta^2}\mathcal{N}^{6(\mathcal{M}+7)^2}\delta\right)^\mathcal{M}\\
&=&\nonumber\delta^{\mathcal{M}+1}\left(\frac{c\mathcal{N}^{6(\mathcal{M}+7)^2\mathcal{M}}}{\tilde\eta^{2\mathcal{M}}}\right)
\\\nonumber&\leq&\delta^{\mathcal{M}+1}\left(\frac c{\tilde\eta^{2\mathcal{M}}}\delta^{-\frac{6(\mathcal{M}+7)^2\mathcal{M}(\mathcal{M}+1)}{p-1}}\right)
\qquad\mbox{(in view of the inequality (\ref{091302}))}\\
&\leq&\nonumber\delta^{\mathcal{M}+1}\left(\frac
c{\tilde\eta^{2\mathcal{M}}}\delta^{-\frac{1}{4}}\right)\qquad \mbox{(in view of $p\geq24(\mathcal{M}+7)^4+1$)}\\
&\leq&\label{091303} \delta^{\mathcal{M}+\frac58}\qquad \mbox{(by
assuming $\delta$ is very small)}.
\end{eqnarray}
In view of the inequalities (\ref{091304}) and (\ref{091303}), we
have
\begin{equation}\label{091307}
|||X_{{
P}}|||^T_{p,D(s_0/4,4\delta,4\delta)\times\Pi_{\eta}}\leq
\delta^{\mathcal{M}+\frac12}.
 \end{equation}
 Moreover, by the inequalities (\ref{081105}) and
 (\ref{091307}), we finish the proof of (\ref{091308'}).

On the other hand,
\begin{eqnarray}
\nonumber||\hat{z}||_1&=&\sqrt{\sum_{|j|\geq\mathcal{N}+1}|z_j|^2j^2}\\
&=&\nonumber\sqrt{\sum_{|j|\geq\mathcal{N}+1}|z_j|^2j^{2p}/j^{2(p-1)}}\\
&\leq&\nonumber\frac{||\hat{z}||_p}{(\mathcal{N}+1)^{p-1}}\\
&\leq&\label{091306}\delta^{\mathcal{M}+1}||\hat{z}||_p\qquad\qquad\qquad
\mbox{( by (\ref{091302}))}.
\end{eqnarray}
Note that ${ Q}(x,y,q,\bar q;\xi)=O(||\hat q||^3_p)$. Then in view of (\ref{011602})-(\ref{012202}) and (\ref{091306}),
we finish the proof of (\ref{091308}).
\end{proof}

\section{Proof of Theorem \ref{T4}}

\textbf{Step 1.  Write equation (\ref{26}) as an infinite dimensional  Hamiltonian system.}\\

Introducing coordinates $q,\tilde q\in\ell^2_p$ by
\begin{equation*}
u=\sum_{j\geq 1}\frac{q_j}{\sqrt{\lambda_j}}\phi_j,\qquad v:=u_t=\sum_{j\geq1}\tilde q_j\sqrt{\lambda_j}\phi_j,\qquad \lambda_j(\xi)=\sqrt{\mu_j+\xi_j},
\end{equation*}
where $\mu_j=j^2+m$ and $\phi_j(x)=\sqrt{2/\pi}\sin
jx$ are respectively
the simple eigenvalues and eigenvectors of
$-\partial_{xx}+m$ with Dirichlet boundary
conditions. The Hamiltonian of (\ref{26}) is
\begin{eqnarray*}
H_{NLW}&=&\int_0^{\pi}\left(\frac{v^2}2+\frac12\left(u_x^2+(m+M_{\xi})u\right)+\frac14u^4\right)dx\nonumber\\
&=&\frac12\sum_{j\geq1}\lambda_j(q_j^2+\tilde {q}_j^2)+G(q),
\end{eqnarray*}
where
\begin{equation*}
G(q)=\frac{1}4\sum_{i\pm j\pm k\pm l=0}G_{ijkl}q_iq_jq_kq_l
\end{equation*}
with
\begin{equation*}
G_{ijkl}=\frac{1}{\sqrt{\lambda_i\lambda_j\lambda_k\lambda_l}}\int_0^{\pi}\phi_i\phi_j\phi_k\phi_ldx.
\end{equation*}

Introduce complex coordinates
\begin{equation*}
w_j:=\frac1{\sqrt{2}}(q_j+\sqrt{-1}\tilde q_j),\qquad \bar w_j:=\frac1{\sqrt{2}}(q_j-\sqrt{-1}\tilde q_j),
\end{equation*}
and let $\underline{z}=((w_j)_{j\geq1},(\bar w_j)_{j\geq1})$.
Following Example 3.2 in \cite{BG}, it is proven that there exists
a constant $c_p>0$ such that
\begin{equation}\label{060701}
||\widetilde{X_{G}}(\underline{z}^{(1)},\underline{z}^{(2)},\underline{z}^{(3)})||_{p+1}\leq
c_p||(\underline{z}^3)||_{p,1}.
\end{equation}
In particular, when $p=1$, the inequality (\ref{060701}) reads
\begin{equation}\label{091901}
||\widetilde{X_{G}}(\underline{z}^{(1)},\underline{z}^{(2)},\underline{z}^{(3)})||_1\leq ||\widetilde{X_{G}}(\underline{z}^{(1)},\underline{z}^{(2)},\underline{z}^{(3)})||_2\leq
c_1||(\underline{z}^3)||_{1,1}.
\end{equation}
The inequalities (\ref{060701}) and (\ref{091901}) show that the
Hamiltonian vector filed $X_G(\underline z)$ has $p$-tame property.

\textbf{Step 2. Introduce the action-angle variables.}\\
Without loss of generality, we choose
$\phi_{1},\phi_{2},\dots,\phi_{n}$ as tangent direction and
the other as normal direction. Let
\begin{equation}\label{091902}
\tilde w=(w_{1},\dots,w_{n}) \qquad\mbox{and}\qquad \hat w
=(w_j)_{j\geq n+1}
\end{equation}be the tangent variable and normal variable, respectively.  Then rewrite $G(w,\bar w)$ in the multiple-index as
\begin{eqnarray}\label{060103}
G(w,\bar{w})=\sum_{|\mu|+|\nu|+|\beta|=4}G^{\mu\nu\beta}\tilde
w^{\mu}\bar{\tilde w}^{\nu}z^{\beta},\qquad
\mu,\nu\in\mathbb{N}^n,\beta\in\mathbb{N}^{\mathbb{N}},
\end{eqnarray} where $z=(\hat w,\bar{\hat w})$ and $G^{\mu\nu\beta}=G_{ijkl}$ for some corresponding $i,j,k,l.$

Introduce action-coordinates on the first $n$ modes
\begin{equation*}
w_j:=\sqrt{I_j+y_j}e^{\sqrt{-1}x_j},\qquad 1\leq j\leq n
\end{equation*}
with
\begin{equation*}
I_j\in\left(\frac{r^{2\theta}}2,r^{2\theta}\right],\qquad \theta\in(0,1).
\end{equation*}

Then the symplectic
structure is $dy\wedge dx+\sqrt{-1}d\hat w\wedge d\bar{\hat w}$, where
$y=(y_1,\dots,y_n)$ and $x=(x_1,\dots,x_n)$.
 Hence, (\ref{060103})
is changed into
\begin{eqnarray}
G(w,\bar w)&=&\nonumber G(x,y,
z)\\&=&\nonumber\sum_{|\mu|+|\nu|+|\beta|=4}2^{\frac12(|\mu|+|\nu|)}G^{\mu\nu\beta}\sqrt{(\zeta+y)^{\mu}}e^{\sqrt{-1}\langle\mu,x\rangle}
\sqrt{(\zeta+y)^{\nu}}e^{-\sqrt{-1}\langle\nu,x\rangle}z^{\beta}\\
\nonumber
&=&\sum_{|\mu|+|\nu|+|\beta|=4}(2^{\frac12(|\mu|+|\nu|)}G^{\mu\nu\beta}
e^{\sqrt{-1}\langle\mu-\nu,x\rangle})\sqrt{(\zeta+y)^{\mu+\nu}}
z^{\beta}\nonumber\\
&=&\sum_{|\beta|\leq4}G^{\beta}(x,y) z^{\beta}\label{121001},
\end{eqnarray}
where \begin{eqnarray*}\zeta=(\zeta_1,\dots,\zeta_n),\quad G^{\beta}(x,y)=\sum_{|\mu|+|\nu|=4-|\beta|}
2^{\frac12(|\mu|+|\nu|)}G^{\mu\nu\beta}e^{\sqrt{-1}\langle\mu-\nu,x\rangle}\sqrt{(\zeta+y)^{\mu+\nu}}.
\end{eqnarray*}

\textbf{Step 3. Show that the Hamiltonian vector field $X_G$ has $p$-tame property.}\\
Note $G$ has $p$-tame property in the coordinate $\underline{z}$, and introducing action-angle variables is a coordinate transformation of finite variables, so $G$ has $p$-tame property in the coordinate $(x,y,z)$. More precisely,
 assume
$G(x,y,z)=\sum_{|\beta|\leq4}G^{\beta}(x,y) z^{\beta}$ is defined on
the domain $(x,y,z)\in D(s,r,r)$ for some
$0<s,r\leq 1$. In this step, we will show that the Hamiltonian vector field
$X_G(x,y,z)$ has $p$-tame property on the domain $D(s,r,r)$.

Firstly, we would like to show
\begin{equation*}
|||G_{ z}|||^{T}_{p,D(s,r,r)\times\Pi}<\infty.
\end{equation*}
In view of (\ref{060103}),
\begin{eqnarray*}
G_{z}(x,y,z)&=&G_z(w,\bar{w})\nonumber\\
&=&\sum_{|\mu|+|\nu|+|\beta|=4}\beta G^{\mu\nu\beta}\tilde
w^{\mu}\bar{\tilde w}^{\nu}z^{\beta-1}.
\end{eqnarray*}Let
\begin{equation*}
G_{z}^{\beta-1}(x,y)=G_{z}^{\beta-1}(\tilde w,\bar{\tilde
w})=\sum_{|\mu|+|\nu|=4-|\beta|}\beta G^{\mu\nu\beta}\tilde
w^{\mu}\bar{\tilde w}^{\nu},
\end{equation*}
and then
\begin{equation*}
G_{z}(x,y,z)=\sum_{|\beta|\leq 4}G_{z}^{\beta-1}(x,y)z^{\beta-1}.
\end{equation*}
Hence, we obtain
\begin{eqnarray}\label{091910}
||\widetilde{\lfloor G_z\rceil}_{D(s,r)\times\Pi}||_{p+1}\leq
c(\sum_{1\leq i\leq 3}||{z}^{(i)}||_p+\sum_{1\leq i\neq j\leq
3}||{z}^{(i)}||_1 ||{z}^{(j)}||_p+||({z}^{3})||_{p,1}),
\end{eqnarray}
where $c>0$ is a constant depending on $s,r,n$ and $p$, and the
above inequality is based on the inequality (\ref{060701}) and the
definition of ${\lfloor \cdot\rceil}_{D(s,r)\times\Pi}$ (see
(\ref{092703}) for the details), and noting that
$\underline{z}=(\tilde w,\hat w,\bar{\tilde w},\bar {\hat w})$ and $z=(\hat w,\bar
{\hat w})$. In particular, when $p=1$, the inequality (\ref{091910}) reads
\begin{eqnarray}\label{091911}
||\widetilde{\lfloor G_z\rceil}_{D(s,r)\times\Pi}||_1\leq \breve
c(\sum_{1\leq i\leq 3}||{z}^{(i)}||_1+\sum_{1\leq i\neq j\leq
3}||{z}^{(i)}||_1 ||{z}^{(j)}||_1+||({z}^{3})||_{1,1}),
\end{eqnarray}where $\breve c>0$ is a constant depending on
$s,r$ and $n$. Based on the inequalities (\ref{091910}) and
(\ref{091911}), we obtain
\begin{equation}\label{060203}
|||G_{z}|||^{T}_{p,D(s,r,r)\times \Pi}<\infty.
\end{equation}
Similarly, we obtain
 \begin{equation}\label{091909}
 |||G_{x}|||_{D(s,r,r)\times\Pi}<\infty
 \end{equation}
 and
\begin{eqnarray}\label{060202} |||G_y|||_{D(s,r,r)\times\Pi}<\infty.
\end{eqnarray}
In view of the inequalities (\ref{060203}), (\ref{091909}) and (\ref{060202}), we get
\begin{equation}\label{091912}
|||X_G|||^{T}_{p,D(s,r,r)\times\Pi}<\infty.
\end{equation}
Finally, we obtain a Hamiltonian $H(x,y,q,\bar q;\xi)$ having the
following form
\begin{equation}H(x,y,q,\bar q;\xi)=N(x,y,q,\bar q;\xi)+R(x,y,q,\bar
q;\xi),\end{equation} where
\begin{equation}
N(x,y,q,\bar q;\xi)=H_{0}(w,\bar w)=\sum_{j=1}^n
\omega_j(\xi)y_j+\sum_{j\geq 1}\Omega_j(\xi)q_j\bar q_j,
\end{equation}with the tangent frequency \begin{equation}\label{001}
\omega({\xi})=(\omega_j(\xi))_{1\leq j\leq n}, \qquad \omega_j=\sqrt{j^2+m+\xi_{j}},
\end{equation} the normal frequency
\begin{equation}\label{002}
\Omega(\xi)=(\Omega_1(\xi),\Omega_2(\xi),\dots,),\qquad
\Omega_j(\xi)=\sqrt{(j+n)^2+m+\xi_{j+n}},\quad j\geq1
\end{equation}
and the perturbation
\begin{equation}\label{003}
R(x,y,q,\bar q;\xi)=\varepsilon G(x,y,q,\bar q;\xi).
\end{equation}

In view of  (\ref{091912}), (\ref{001})-(\ref{003}), all assumptions in Theorem \ref{T1} hold. According to Theorem \ref{T1}, we obtain a KAM normal form of order 2 where the nonlinear terms satisfy $p$-tame property.

Furthermore, let $\delta$ be given in the statement of Theorem \ref{thm7.1} and $\mathcal{N}$ be given in (\ref{091302}). Take $\eta$ satisfying $\delta^{1/6}<
\tilde\eta<{\mathcal{N}}^{-3}$. Then we obtain a KAM partial normal form of order $\mathcal{M}+2$ where the nonlinear terms satisfy $p$-tame property.

Finally, based on Corolarry \ref{T3}, for each $\xi\in\Pi_{\eta}\subset\Pi_{\infty}$, the KAM
torus $\mathcal{T}_{\xi}$ for equation (\ref{26}) is stable in a long time, i.e.
for any solution
$u(t,x)$ of equation (\ref{26}) with the
initial datum satisfying \color{black}
$${d}_{H^p_0[0,\pi]}(u(0,x),\mathcal{T}_{\xi})\leq \delta,$$
then
\begin{equation*}{d}_{H^p_0[0,\pi]}(u(t,x),\mathcal{T}_{\xi})\leq
2\delta,\qquad \mbox{for all} \ |t|\leq
{\delta}^{-\mathcal{M}}.
\end{equation*}

\section*{Acknowledgements}{\it In the Autumn of 2007,
Professor H. Eliasson gave a series of lectures on KAM theory for
Hamiltonian PDEs in Fudan University. In those lectures, he proposed
to study the normal form in the neighborhood of the invariant tori
and the nonlinear stability of the invariant tori.  The
authors are heartily grateful to Professor Eliasson.

The authors are also heartily grateful to Professor Bambusi and Professor Yuan for valuable discussions and suggestions. }

\section{Appendix: Properties of the Hamiltonian with $p$-tame property}

The following lemma shows that $p$-tame property persists under
Poisson brackets, which can be parallel proved following the proof
of Theorem 3.1 in \cite{CLY}:

\begin{lem}\label{021102}
Suppose that both Hamiltonian functions
$$U(x,y,z;\xi)=\sum_{\beta\in\mathbb{N}^{\bar{\mathbb{Z}}}}U^{\beta}(x,y;\xi)z^{\beta}$$
and
$$V(x,y,z;\xi)=\sum_{\beta\in\mathbb{N}^{\bar{\mathbb{Z}}}}V^{\beta}(x,y;\xi)z^{\beta},$$
satisfy $p$-tame property on the domain $D(s,r,r)\times\Pi$, where
\begin{equation*}
U^{\beta}(x,y;\xi)=\sum_{\alpha\in\mathbb{N}^{{n}}}U^{\alpha\beta}(x;\xi)y^{\alpha}
\end{equation*} and
\begin{equation*}
V^{\beta}(x,y;\xi)=\sum_{\alpha\in\mathbb{N}^{{n}}}V^{\alpha\beta}(x;\xi)y^{\alpha}
.\end{equation*} Then the Poisson bracket $\{U,V\}(x,y,z;\xi)$ of
$U(x,y,z;\xi)$ and $V(x,y,z;\xi)$ with respect to the symplectic
structure $\sum_{1\leq j\leq n}dy_j\wedge dx_j+\sqrt{-1}\sum_{j\geq
1}dz_{-j}\wedge dz_j$ has $p$-tame property on the domain
$D(s-\sigma,r-\sigma',r-\sigma')\times\Pi$ for
$0<\sigma<s,0<\sigma'<r/2$. Moreover, the following inequality holds
\begin{eqnarray}&&|||X_{\{U,V\}}|||_{p,D(s-\sigma,r-\sigma',r-\sigma')\times\Pi}^T
\nonumber\\
\label{091120}&\leq&C\max\left\{\frac 1{\sigma},\frac
{r}{\sigma'}\right\}|||X_{U}|||_{p,D(s,r,r)\times\Pi}^T
|||X_{V}|||_{p,D(s,r,r)\times\Pi}^T,
\end{eqnarray}
where $C>0$ is a constant depending on $n$.
\end{lem}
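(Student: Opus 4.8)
The starting point is the identity relating the Hamiltonian vector field of a Poisson bracket to the vector fields of the two factors. With the symplectic structure $\sum_{1\le j\le n}dy_j\wedge dx_j+\sqrt{-1}\sum_{j\ge1}dz_{-j}\wedge dz_j$ one has, up to signs,
$$\{U,V\}=\langle U_y,V_x\rangle-\langle U_x,V_y\rangle+\sqrt{-1}\langle JU_z,V_z\rangle,$$
and $X_{\{U,V\}}=(\partial_y\{U,V\},-\partial_x\{U,V\},\sqrt{-1}J\partial_z\{U,V\})$. Differentiating, every component of $X_{\{U,V\}}$ is a finite sum of contraction terms, each being the pairing of a first-order partial derivative of $V$ with a second-order partial derivative of $U$ (and the same with $U$ and $V$ exchanged): schematically $\langle U_{yx},V_x\rangle$, $\langle U_{yy},V_x\rangle$, $\langle JU_{zx},V_z\rangle$, $\langle JU_{zz},V_z\rangle$, and so on. Hence it suffices to bound the $p$-tame norm of one such contraction term by $|||X_U|||^T|||X_V|||^T$ up to the Cauchy factor, and then sum the finitely many of them.

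The plan is to carry this out with three ingredients. First, pass to moduli: since the modulus of a Poisson bracket is dominated coefficient-wise by the Poisson bracket of the moduli formed with the ``absolute'' symplectic structure, $\lfloor\{U,V\}\rceil\preceq\{\lfloor U\rceil,\lfloor V\rceil\}$, one may replace $U$ and $V$ by power series with non-negative coefficients, for which the operator norms in Definition \ref{071803} are monotone and the whole estimate reduces to bookkeeping on the coefficients $U^{\alpha\beta}$, $V^{\alpha\beta}$; moreover $\partial_{\xi_j}$ commutes with the construction by the Leibniz rule, so the $\xi$-derivative contributions in (\ref{092704}) are handled in parallel. Second, use generalized Cauchy estimates adapted to the norms (\ref{092704}) and (\ref{092703}): differentiation in the angle $x$ together with the restriction $D(s)\to D(s-\sigma)$ costs a factor $\lesssim1/\sigma$, because $|k|e^{-|k|\sigma}\lesssim1/\sigma$; differentiation in $y$ together with the restriction $r\to r-\sigma'$ (using $\sigma'<r/2$, so $r^2-(r-\sigma')^2\ge r\sigma'$) costs $\lesssim1/(r\sigma')$, which once combined with the weights in (\ref{051703}) produces exactly the factors $1/\sigma$ and $r/\sigma'$ appearing in (\ref{091120}). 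Third, use that the operator norms $|||\cdot|||$ of symmetric multilinear maps are sub-multiplicative under products of power series, so that the $z$-multilinear structure tensors correctly: a contraction term of total homogeneity $h$ in $z$ factors into a homogeneity-$h_1$ piece built from $U$ and a homogeneity-$h_2$ piece built from $V$ with $h_1+h_2=h$, and the weight $||(z^{h})||_{p,1}$ of (\ref{091401}) satisfies the matching Leibniz-type splitting $||(z^{h})||_{p,1}\le||(z^{h_1})||_{p,1}||z||_1^{h_2}+||z||_1^{h_1}||(z^{h_2})||_{p,1}$ (the slot carrying the $p$-norm lies either among the first $h_1$ factors or among the last $h_2$).

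The heart of the matter, and the step I expect to be the main obstacle, is the $z$-component $\sqrt{-1}J\partial_z\langle JU_z,V_z\rangle$ estimated in the modified $p$-tame norm (\ref{091402}), which now requires the modulus of $J$ times the $z$-gradient to be a bounded map into $\ell^2_{p+1}$ rather than merely into $\ell^2_p$ as in \cite{CLY}. Upon differentiating, this component is a sum of pieces of the form $(U_{zz})(JV_z)$ and $(V_{zz})(JU_z)$; in each piece one must measure one factor in $\ell^2_{p+1}$ via the $p$-tame operator norm (\ref{091402}) of the corresponding Hamiltonian and the other factor in $\ell^2_1$, equivalently $\ell^1$, via the $1$-operator norm (\ref{122401}), the pairing being controlled by $\sum_j|a_j||b_j|\le||a||_{p+1}||b||_{-(p+1)}\le||a||_{p+1}||b||_1$. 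This is precisely why the $p$-tame norm of the vector field is defined with the maximum of $|||\cdot|||_p^T$ and $|||\cdot|||_1$ in (\ref{091403}): the $1$-operator norm is what makes the ``other factor'' available in $\ell^1$. The delicate point is to propagate the one-unit gain of regularity through every such contraction without spending it in the convolution over the $z$-indices; this is where the argument of \cite{CLY} genuinely has to be reorganized for the NLW norm (\ref{091402}), and it also forces the parallel check that the $1$-operator norm itself behaves sub-multiplicatively under the bracket.

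Finally, one collects the finitely many contraction terms, inserts the Cauchy factor $\max\{1/\sigma,\,r/\sigma'\}$ coming from the loss in $s$ and in $(y,z)$, and sums over the homogeneity degrees; the degree-sums converge because $|||X_U|||^T_{p,D(s,r,r)\times\Pi}$ and $|||X_V|||^T_{p,D(s,r,r)\times\Pi}$ are finite and, by Definition \ref{080204}, already collect all degrees, so their product reproduces the right-hand side of (\ref{091120}). Absorbing the remaining combinatorial constants into $C=C(n)$ completes the argument.
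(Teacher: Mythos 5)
Your sketch follows the same route the paper intends: the paper gives no proof of Lemma \ref{021102} beyond deferring to Theorem 3.1 of \cite{CLY}, and what you describe --- passing to majorants, generalized Cauchy estimates producing the factors $1/\sigma$ and $r/\sigma'$ once matched against the weights in (\ref{051703}), and the Leibniz-type splitting of the weight (\ref{091401}) matched to the maximum in (\ref{091403}) --- is precisely that argument adapted to the modified norm (\ref{091402}).

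The one step you flag as the expected obstacle, propagating the extra unit of regularity in the $z$-component, in fact closes with no reorganization of \cite{CLY}. Since $U_{zz}$ and $V_{zz}$ are symmetric Hessians and $J^{T}=-J$, every term of $J\partial_z\{U,V\}$ has the form $D(JU_z)[\cdot]$ or $D(JV_z)[\cdot]$ with one slot occupied by a first-order derivative of the other Hamiltonian: for the $z$-part of the bracket one gets $\pm JU_{zz}JV_z=\pm D(JU_z)[JV_z]$ and its mirror image, and for the $(x,y)$-part one gets contractions such as $\sum_i V_{x_i}\,\partial_{y_i}(JU_z)$. The outer factor is the differential of $JU_z$ (resp.\ $JV_z$), i.e.\ the same multilinear coefficients with one more slot exposed, so it inherits verbatim the $\ell^2_{p+1}$-valued tame bound of (\ref{091402}); the inserted factor occupies either a $1$-slot, measured by (\ref{122401}), or the $p$-slot, measured by (\ref{091402}) together with $||\cdot||_p\leq||\cdot||_{p+1}$ --- which is exactly what the maximum in (\ref{091403}) is designed to provide. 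There is no convolution over $z$-indices in which the gain could be spent. The only point left implicit in your write-up is the combinatorial bookkeeping that reassembles $||(z^{h_1-1})||_{p,1}\prod_j||z^{(j)}||_1$ into $||(z^{h-1})||_{p,1}$ after symmetrization; this is absorbed into the constant $C(n)$ exactly as in \cite{CLY}.
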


Denote $X_U^t $ by the flow of the Hamiltonian vector field of
$U(x,y,z;\xi)$. It follows from Taylor¡¯s formula that
\begin{equation}\label{081108}
V\circ X_U^t(x,y,z;\xi)=\sum_{j\geq 0}
\frac{t^{j}}{j!}V^{(j)}(x,y,z;\xi),
\end{equation}
where
\begin{equation*}
V^{(0)}(x,y,z;\xi):=V(x,y,z;\xi),\qquad
V^{(j)}(x,y,z;\xi):=\{V^{(j-1)},U\}(x,y,z;\xi).
\end{equation*}
Then based on (\ref{091120}) in Lemma {\ref{021102}} and
(\ref{081108}), we have the following estimate of symplectic
transformation, which can be parallel proved following the proof of
Theorem 3.3 in \cite{CLY}:

\begin{lem}\label{081914}
Consider two Hamiltonians $U(x,y,z;\xi)$ and $V(x,y,z;\xi)$
satisfying $p$-tame property on the domain $D(s,r,r)\times\Pi$ for
some $0<s,r\leq1$. Given $0<\sigma<s, 0<\sigma'<r/2$, suppose
\begin{equation*}\label{081112}|||X_U|||_{p,D(s,r,r)\times\Pi}^T\leq \frac1{2A},
\end{equation*}
where
\begin{equation*}\label{090504}A=4Ce\max\left\{\frac{1}{\sigma},\frac{r}{\sigma'}\right\}
\end{equation*}
and $C>0$ is the constant given in (\ref{091120}) in Theorem
\ref{021102}. Then for each $|t|\leq 1$, we have
\begin{equation*}
|||X_{V\circ
X_U^t}|||_{p,D(s-\sigma,r-\sigma',r-\sigma')\times\Pi}^T\leq
2|||X_V|||_{p,D(s,r,r)\times\Pi}^T.
\end{equation*}
\end{lem}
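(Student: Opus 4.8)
The plan is to prove Lemma \ref{081914} by combining the Taylor expansion \eqref{081108} of $V\circ X_U^t$ with repeated application of the Poisson-bracket estimate \eqref{091120} from Lemma \ref{021102}, summing the resulting geometric series. First I would observe that it suffices to bound each term $\frac{t^j}{j!}V^{(j)}$ on the shrunken domain $D(s-\sigma,r-\sigma',r-\sigma')$ and then sum over $j\geq 0$. For this I introduce a scale of intermediate domains, say $D_j:=D(s-j\sigma/? ,\dots)$ — more precisely, since we only have a total loss budget of $\sigma$ in the angle direction and $\sigma'$ in the action/normal directions, I would split the loss at step $j$ into pieces summing to $\sigma$ (respectively $\sigma'$); but the cleaner route used in such arguments is to apply \eqref{091120} at each step with a fixed fraction of the budget, accepting a constant depending only on how the budget is partitioned. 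Concretely, at the $j$-th Poisson bracket I lose $\sigma/2^{j+1}$ of the remaining angle-width and similarly for the action width, so that the total loss is bounded by $\sigma$ and $\sigma'$.

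The key inductive step: assuming $V^{(j-1)}$ has $p$-tame property on the appropriate domain with $|||X_{V^{(j-1)}}|||^T_p \leq (2A)^{j-1}\,\big(|||X_U|||^T_p\big)^{j-1}\,|||X_V|||^T_p$ (constants absorbed), apply Lemma \ref{021102} to $\{V^{(j-1)},U\}$ to obtain
\begin{equation*}
|||X_{V^{(j)}}|||^T_{p} \leq C\max\Big\{\tfrac1{\sigma'},\tfrac{r}{\sigma''}\Big\}\,|||X_{V^{(j-1)}}|||^T_p\,|||X_U|||^T_p,
\end{equation*}
where $\sigma',\sigma''$ are the step-$j$ budget fractions. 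Tracking the accumulated constants (the factor $4Ce\max\{1/\sigma,r/\sigma'\}=A$ in the definition is designed precisely so that the step constant $C\max\{\cdots\}$ at stage $j$ is at most $A\cdot 2^{j}/e$ or similar, and the product over $j$ steps together with the $1/j!$ from Taylor is controlled), one gets $\frac{t^j}{j!}|||X_{V^{(j)}}|||^T_p \leq \frac{1}{j!}\,(2A\,|||X_U|||^T_p)^{j}\,|||X_V|||^T_p \cdot (\text{combinatorial factor})$. Under the hypothesis $|||X_U|||^T_p\leq \frac1{2A}$, each term is bounded by $\frac{c_j}{j!}|||X_V|||^T_p$ with $\sum_j c_j/j! \leq 2$ (this is where the constant $e$ in $A=4Ce\max\{\cdots\}$ pays for the Taylor series $\sum 1/j!=e$, and the factor $4$ for the dyadic partition of the domain-loss budget). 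Summing \eqref{081108} then yields $|||X_{V\circ X_U^t}|||^T_p \leq 2|||X_V|||^T_p$ on $D(s-\sigma,r-\sigma',r-\sigma')$ for all $|t|\leq 1$.

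The main obstacle is the careful bookkeeping of the domain-loss partition and the interaction between the dyadic shrinking of domains and the growth of the Poisson-bracket constant $\max\{1/\sigma_j, r/\sigma_j'\}$ at each stage: one must choose the partition $\sum_j \sigma_j = \sigma$ so that $\prod_{j=1}^{N}\big(2C\max\{1/\sigma_j, r/\sigma_j'\}\big) \big/ N!$ stays summable, and verify that the chosen $A$ absorbs all the resulting constants uniformly in $N$. A second, more technical point is justifying that each $V^{(j)}$ is genuinely analytic on the claimed domain and that the series \eqref{081108} converges in the $p$-tame norm (not merely formally), which follows once the term-by-term bounds above are in place by a Cauchy-majorant/Weierstrass argument. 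Since this whole scheme is carried out in Theorem 3.3 of \cite{CLY} and the only change here is the modified $p$-tame norm \eqref{091402} — which still obeys the same product/Poisson-bracket inequality \eqref{091120} — the proof transfers verbatim, and I would simply indicate this parallel and refer to \cite{CLY} for the routine estimates.
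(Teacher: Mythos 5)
Your overall strategy---expand $V\circ X_U^t$ by the Taylor/Lie series (\ref{081108}), iterate the Poisson-bracket estimate (\ref{091120}) on a chain of shrinking domains, and sum---is exactly the route the paper intends: the paper gives no detailed proof here, it simply displays (\ref{081108}) and refers to Theorem 3.3 of \cite{CLY}, so in spirit you are aligned with the source.

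However, the one piece of concrete bookkeeping you commit to is wrong and would make the argument fail. If at the $k$-th bracket you spend $\sigma_k=\sigma/2^{k+1}$ of the angle budget, then the accumulated constant for the $N$-th Taylor term is
\begin{equation*}
\prod_{k=1}^{N} C\max\Bigl\{\tfrac{1}{\sigma_k},\tfrac{r}{\sigma_k'}\Bigr\}
\;\geq\; \Bigl(C\max\Bigl\{\tfrac{1}{\sigma},\tfrac{r}{\sigma'}\Bigr\}\Bigr)^{N} 2^{N(N+3)/2},
\end{equation*}
and $2^{N^2/2}/N!\to\infty$, so the $1/N!$ from the Taylor series cannot absorb a dyadic partition; the geometric smallness $(2A)^{-N}$ of $|||X_U|||^N$ does not help against a $2^{cN^2}$ growth either. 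The standard (and intended) choice is to bound the $N$-th term by splitting the budget into $N$ \emph{equal} pieces $\sigma/N$, $\sigma'/N$, which gives $\bigl(CN\max\{1/\sigma,r/\sigma'\}\bigr)^{N}/N!\leq\bigl(Ce\max\{1/\sigma,r/\sigma'\}\bigr)^{N}$ by Stirling's inequality $N^N\leq e^N N!$. This is also where the factor $e$ in $A=4Ce\max\{1/\sigma,r/\sigma'\}$ actually comes from --- not, as you write, from $\sum_j 1/j!=e$; after Stirling the series is simply geometric, $\sum_{N\geq0}(A\,|||X_U|||^T_p/(4))^N\cdot(\mbox{const})\leq\sum_{N\geq 0}8^{-N}<2$ under the hypothesis $|||X_U|||^T_p\leq 1/(2A)$. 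With that correction your argument closes; the remaining points you raise (analyticity of each $V^{(j)}$ and convergence of the series in the $p$-tame norm) are indeed routine once the term-by-term bounds hold.
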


The following lemma estimates $p$-tame norm of the solution of
homological equation during KAM iterative procedure and normal form
iterative procedure, which can be parallel proved following the
proof of Theorem 3.4 in \cite{CLY}:

\begin{lem}\label{0005}Consider two Hamiltonians
$$U(x,y,z;\xi)=\sum_{\alpha\in\mathbb{N}^n,\beta\in\mathbb{N}^{\bar{\mathbb{Z}}}}
U^{\alpha\beta}(x;\xi)y^{\alpha}z^{\beta}$$ and
$$V(x,y,z;\xi)=\sum_{\alpha\in\mathbb{N}^n,\beta\in\mathbb{N}^{\bar{\mathbb{Z}}}}
V^{\alpha\beta}(x;\xi)y^{\alpha}z^{\beta}.$$ Suppose $V(x,y,z;\xi)$
has $p$-tame property on the domain $D(s,r,r)\times \Pi$, i.e
$$|||X_V|||_{p,D(s,r,r)\times\Pi}^T<\infty.$$ For each
$\alpha\in\mathbb{N}^n,\beta\in\mathbb{N}^{\bar{\mathbb{Z}}},k\in
\mathbb{Z}^n, j\geq 1$ and some fixed constant $\tau>0$, assume the
following inequality holds
\begin{equation*}\label{0007}
|\widehat{U^{\alpha\beta}}(k;\xi)|+|\partial_{\xi_j}\widehat{U^{\alpha\beta}}(k;\xi)|\leq
(|k|+1)^{\tau}(|\widehat{V^{\alpha\beta}}(k;\xi)|+|\partial_{\xi_j}\widehat{V^{\alpha\beta}}(k;\xi)|)
,\end{equation*} where $\widehat{U^{\alpha\beta}}(k;\xi)$ and
$\widehat{V^{\alpha\beta}}(k;\xi)$ are the $k$-th Fourier
coefficients of $U^{\alpha\beta}(x;\xi)$ and
$V^{\alpha\beta}(x;\xi)$, respectively. Then, $U(x,y,z;\xi)$ has
$p$-tame property on the domain $D(s-\sigma,r,r)\times\Pi$ for
$0<\sigma<s$. Moreover, we have
\begin{equation}\label{0012} |||X_U|||_{p,D(s-\sigma,r,r)\times\Pi}^T\leq
\frac{c}{\sigma^{\tau}} |||X_V|||_{p,D(s,r,r)\times\Pi}^T,
\end{equation}
where $c>0$ is a constant depending on $s$ and $\tau$.
\end{lem}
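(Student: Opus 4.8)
The plan is to read the hypothesis of the lemma as a \emph{coefficient-wise} domination of $U$ by $V$, in which the $k$-th Fourier mode of each Taylor coefficient carries an extra factor $(|k|+1)^{\tau}$, and to push this domination through every layer in the construction of the $p$-tame norm. Since the quantities in Definitions \ref{112901}--\ref{080204} are built from the Taylor--Fourier coefficients of the components of $X_W=(W_y,-W_x,\sqrt{-1}JW_z)$ by operations that respect inequalities between nonnegative numbers -- taking moduli of Fourier coefficients and of their $\xi$-derivatives, summing a Fourier series against the weight $e^{|k|s}$, forming the modulus (\ref{081602}), and taking suprema of ratios over nonzero test sequences $z^{(j)}$ -- the only genuine mechanism is the trade of the polynomial Fourier weight $(|k|+1)^{\tau}$ for a factor $\sigma^{-\tau}$ coming from the shrinking of the analyticity strip. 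The argument runs parallel to the proof of Theorem 3.4 in \cite{CLY}.

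First I would record the elementary inequality: for any $\tau>0$ and $0<\sigma<s\leq1$ one has $(|k|+1)^{\tau}e^{-|k|\sigma}\leq c\,\sigma^{-\tau}$ for all $k\in\mathbb{Z}^{n}$, with $c=c(s,\tau)$, obtained by maximizing $t\mapsto(t+1)^{\tau}e^{-t\sigma}$ over $t\geq0$ (the maximum sits near $t\sim\tau/\sigma$). Combined with Definition \ref{112901}, where the weight in (\ref{092704}) is $e^{|k|s}$, the hypothesis of the lemma gives at once, for all multi-indices $\alpha,\beta$,
\begin{equation*}
||U^{\alpha\beta}||_{D(s-\sigma)\times\Pi}\leq\frac{c}{\sigma^{\tau}}\,||V^{\alpha\beta}||_{D(s)\times\Pi},
\end{equation*}
because $e^{|k|(s-\sigma)}=e^{|k|s}e^{-|k|\sigma}$, and the bound in the hypothesis, being pointwise in $j$, survives the summation over $k$ and the supremum over $\xi$ and $j$.

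Next I would check that this coefficient-wise bound is inherited by each of the three partial derivatives $W_y,W_x,W_z$. Writing $U=\sum_{\alpha,\beta}U^{\alpha\beta}(x;\xi)y^{\alpha}z^{\beta}$ and likewise for $V$: the coefficient of $y^{\alpha}z^{\beta}$ in $\partial_{y_\ell}U$ equals $(\alpha_\ell+1)U^{(\alpha+e_\ell)\beta}$ and that in $\partial_{z_\ell}U$ equals $(\beta_\ell+1)U^{\alpha(\beta+e_\ell)}$; these combinatorial multipliers are $k$-independent and identical for $U$ and $V$, so the hypothesis with multiplier $(|k|+1)^{\tau}$ is inherited verbatim by the pairs $(U_y,V_y)$ and $(U_z,V_z)$, whence $||(U_v)^{\alpha\beta}||_{D(s-\sigma)\times\Pi}\leq c\sigma^{-\tau}||(V_v)^{\alpha\beta}||_{D(s)\times\Pi}$ for $v=y$ and $v=z$. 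For $v=x$ one has $\widehat{(\partial_x U^{\alpha\beta})}(k;\xi)=\sqrt{-1}k\,\widehat{U^{\alpha\beta}}(k;\xi)$, so the factor $|k|$ appears on both sides and
\begin{equation*}
|\widehat{(U_x)^{\alpha\beta}}(k;\xi)|+|\partial_{\xi_j}\widehat{(U_x)^{\alpha\beta}}(k;\xi)|\leq(|k|+1)^{\tau}\left(|\widehat{(V_x)^{\alpha\beta}}(k;\xi)|+|\partial_{\xi_j}\widehat{(V_x)^{\alpha\beta}}(k;\xi)|\right),
\end{equation*}
which again yields $||(U_x)^{\alpha\beta}||_{D(s-\sigma)\times\Pi}\leq c\sigma^{-\tau}||(V_x)^{\alpha\beta}||_{D(s)\times\Pi}$. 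I expect this last point to be the main, if mild, pitfall: $U_x$ must be compared with $V_x$ and not with $V$, so that the $|k|$ produced by $\partial_x$ stays on both sides and only $\sigma^{-\tau}$, not $\sigma^{-\tau-1}$, is lost; the rest is bookkeeping through the nested norm definitions.

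Finally I would assemble. Fix a piece $U_h$ of $U$ homogeneous of degree $h$ in $z$ and the corresponding $V_h$. Since the modulus (\ref{081602}) and the norm (\ref{092703}) are, in each argument, monotone and positively linear in the nonnegative norms of the Taylor--Fourier coefficients of the component in question, and the evaluation on test sequences is multilinear with nonnegative coefficients, the bounds above give, componentwise and for all nonzero test sequences,
\begin{equation*}
\lfloor\widetilde{JU_{h,z}}\rceil_{D(s-\sigma,r)\times\Pi}(z^{(1)},\dots,z^{(h-1)})\leq\frac{c}{\sigma^{\tau}}\,\lfloor\widetilde{JV_{h,z}}\rceil_{D(s,r)\times\Pi}(z^{(1)},\dots,z^{(h-1)}),
\end{equation*}
together with the analogous inequalities for $\lfloor\widetilde{U_{h,x}}\rceil$ and $\lfloor\widetilde{U_{h,y}}\rceil$. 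Taking the $(p+1)$-, $1$- and sup-norms appearing in (\ref{091402})--(\ref{091404}), dividing by the corresponding test-norms $||(z^{h-1})||_{p,1}$, $||(z^{h-1})||_{1,1}$ and $||(z^{h})||_{1,1}$, passing to the supremum over the test sequences, multiplying by the unchanged weights $r^{h},r^{h-1}$ of (\ref{091405}), (\ref{091403}) and summing over $h\geq0$ as in (\ref{102201}), gives
\begin{equation*}
|||X_{U}|||_{p,D(s-\sigma,r,r)\times\Pi}^{T}\leq\frac{c}{\sigma^{\tau}}\,|||X_{V}|||_{p,D(s,r,r)\times\Pi}^{T}<\infty,
\end{equation*}
which is (\ref{0012}) and in particular establishes the $p$-tame property of $U$ on $D(s-\sigma,r,r)\times\Pi$.
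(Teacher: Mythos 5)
Your proposal is correct and follows essentially the same route as the paper's (the paper defers to the proof of Theorem 3.4 in \cite{CLY}, which is exactly this argument): dominate each Taylor--Fourier coefficient, trade the weight $(|k|+1)^{\tau}$ for $c\,\sigma^{-\tau}$ via $(|k|+1)^{\tau}e^{-|k|\sigma}\leq c\,\sigma^{-\tau}$ upon shrinking the analyticity strip, and propagate the resulting coefficient-wise bound through the monotone, nonnegative-linear operations defining the $p$-tame norm. You correctly isolate the one delicate point, namely that $U_x$ must be compared with $V_x$ so that the factor $|k_\ell|$ from $\partial_x$ appears on both sides and no extra power of $\sigma^{-1}$ is lost.
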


The following lemma compares $p$-tame norm with the usual weighted
norm for Hamiltonian vector field, which can be parallel proved
following the proof of Theorem 3.5 in \cite{CLY}:

\begin{lem}\label{012002}Given a Hamiltonian
\begin{equation*}\label{100802}U(x,y,z;\xi)=\sum_{\beta\in\mathbb{N}^{\bar{\mathbb{Z}}}}
U^{\beta}(x,y;\xi)z^{\beta} \end{equation*} satisfying $p$-tame
property on the domain $D(s,r,r)\times\Pi$ for some $0<s,r\leq 1$.
Then we have
\begin{equation}\label{081105}
|||X_U|||_{\mathcal{P}^p,D(s,r,r)\times\Pi}\leq
|||X_U|||_{p,D(s,r,r)\times\Pi}^T.
\end{equation}
\end{lem}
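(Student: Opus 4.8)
The plan is to establish the inequality at each point $(x,y,z;\xi)\in D(s,r,r)\times\Pi$ and then pass to the supremum. Write $U=\sum_{h\geq 0}U_h$ with $U_h=\sum_{|\beta|=h}U_h^{\beta}(x,y;\xi)z^{\beta}$ as in Definition \ref{080204}, and bound the three summands of (\ref{081101}) separately, term by term in $h$, invoking exactly the estimates (\ref{011602})--(\ref{012202}) from the Remark following Definition \ref{071803}. The single ingredient beyond those estimates is that on $D(s,r,r)$ one has $||z||_1\leq||z||_p<r$: the first inequality holds because $p\geq 1$ gives $j^2\leq j^{2p}$ for every $j\geq 1$, and the second is the definition of the domain; note also $r\leq 1$.

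For the $z$-component, (\ref{011602}) gives
$$
||(U_h)_z||_{p+1}\leq|||(U_h)_z|||^T_{p,D(s,r)\times\Pi}\,||z||_p\,||z||_1^{\max\{h-2,0\}}.
$$
For $h\geq 2$ one has $||z||_p\,||z||_1^{h-2}<r\cdot r^{h-2}=r^{h-1}$, so by the definition (\ref{091403}) this is $<|||(U_h)_z|||^T_{p,D(s,r,r)\times\Pi}$; for $h=1$ the power $||z||_1^{\max\{h-2,0\}}$ equals $1$ and $||z||_p<r\leq 1$; for $h=0$ the term $(U_0)_z$ vanishes since $U_0$ is independent of $z$. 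Hence in all cases $||(U_h)_z||_{p+1}\leq|||(U_h)_z|||^T_{p,D(s,r,r)\times\Pi}$ (absorbing harmless factors of $r\leq 1$). An analogous and simpler computation handles the $x$- and $y$-components: by (\ref{012201}) and the definition (\ref{091405}), $||(U_h)_x||\leq|||(U_h)_x|||_{D(s,r)\times\Pi}||z||_1^{h}<|||(U_h)_x|||_{D(s,r)\times\Pi}r^{h}=|||(U_h)_x|||_{D(s,r,r)\times\Pi}$, and likewise $||(U_h)_y||\leq|||(U_h)_y|||_{D(s,r,r)\times\Pi}$ from (\ref{012202}).

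Summing these bounds over $h$ and collecting terms, for each $h$ the quantity $||(U_h)_y||+\frac{1}{r^2}||(U_h)_x||+\frac{1}{r}||(U_h)_z||_{p+1}$ is bounded by $|||(U_h)_y|||_{D(s,r,r)\times\Pi}+\frac{1}{r^2}|||(U_h)_x|||_{D(s,r,r)\times\Pi}+\frac{1}{r}|||(U_h)_z|||^T_{p,D(s,r,r)\times\Pi}$, which is precisely $|||X_{U_h}|||^T_{p,D(s,r,r)\times\Pi}$ by (\ref{051703}). Summing over $h$ and using (\ref{102201}) gives $||U_y||+\frac{1}{r^2}||U_x||+\frac{1}{r}||U_z||_{p+1}\leq|||X_U|||^T_{p,D(s,r,r)\times\Pi}$ at every point of $D(s,r,r)\times\Pi$, and taking the supremum yields (\ref{081105}). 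The argument is routine; the only points demanding attention are the separate treatment of the low-degree terms $h=0,1$ (where $\max\{h-2,0\}=0$ and a few factors of $r\leq 1$ must be absorbed) and justifying that the infinite sum over $h$ may be differentiated term by term, which is exactly what the hypothesis $|||X_U|||^T_{p,D(s,r,r)\times\Pi}<\infty$ guarantees.
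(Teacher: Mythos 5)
Your argument is correct: the term-by-term comparison of $\|U_y\|+\tfrac1{r^2}\|U_x\|+\tfrac1r\|U_z\|_{p+1}$ with the homogeneous pieces of the $p$-tame norm, using the pointwise estimates (\ref{011602})--(\ref{012202}) together with $\|z\|_1\le\|z\|_p<r\le1$ on $D(s,r,r)$, is exactly the standard verification. The paper itself gives no proof of Lemma \ref{012002} beyond citing Theorem 3.5 of \cite{CLY}, and your computation is the expected one, including the correct handling of the low-degree cases $h=0,1$.
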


Then based on (\ref{081105}) in Lemma {\ref{012002}} and the proof
of Lemma A.4 in \cite{P1}, we have the following lemma:

\begin{lem}\label{081503}
Suppose the Hamiltonian
$$U(x,y,z;\xi)=\sum_{\beta\in\mathbb{N}^{\bar{\mathbb{Z}}}}U^{\beta}(x,y;\xi)z^{\beta}$$
has $p$-tame property on the domain $D(s,r,r)\times\Pi$ for some
$0<s,r\leq 1$. Let $X_U^{t}$ be the phase flow generalized by the
Hamiltonian vector field $X_U$. Given $0<\sigma<s$ and
$0<\sigma'<r/2$, assume
\begin{equation*}\label{091810}|||X_U|||_{p,D(s,r,r)\times\Pi}^T<\min\{\sigma,\sigma'\}.
\end{equation*}
Then, for each $\xi\in\Pi$ and each $|t|\leq 1$, one has
\begin{equation}\label{091101}
||X_U^t-id||_{p,D(s-\sigma,r-\sigma',r-\sigma')}\leq
|||X_U|||_{p,D(s,r,r)\times\Pi}^T,
\end{equation}
where
\begin{equation}\label{081102}
||X_U^t-id||_{p,D(s-\sigma,r-\sigma',r-\sigma')}=\sup_{w\in
D(s-\sigma,r-\sigma',r-\sigma')}||(X_U^t-id)w||_{\mathcal{P}^{p},D(s,r,r)}.
\end{equation}
\end{lem}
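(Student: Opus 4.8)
The plan is to run the classical a-priori-estimate and continuation argument for the flow of an analytic vector field, using Lemma \ref{012002} to trade the $p$-tame norm for the weighted sup-norm that actually controls integral curves. Set $\mu:=|||X_U|||^T_{p,D(s,r,r)\times\Pi}$, so that by hypothesis $\mu<\min\{\sigma,\sigma'\}$ and, by (\ref{081105}), $|||X_U|||_{\mathcal{P}^p,D(s,r,r)\times\Pi}\le\mu$. In particular $X_U$ is a bounded analytic vector field on $D(s,r,r)$, hence (Cauchy estimate) Lipschitz on every strictly smaller subdomain, so for each fixed $\xi\in\Pi$ and each $w\in D(s-\sigma,r-\sigma',r-\sigma')$ the problem $\dot w=X_U(w)$, $w(0)=w$, has a unique maximal solution $X_U^t(w)$, which satisfies $X_U^t(w)=w+\int_0^t X_U(X_U^{\tau}(w))\,d\tau$ as long as the orbit stays in $D(s,r,r)$.

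Next I would introduce $T^{\ast}:=\sup\{t\in[0,1]:X_U^{\tau}(w)\in D(s,r,r)\ \text{for all}\ \tau\in[0,t]\}$ and show $T^{\ast}=1$. For $0\le t<T^{\ast}$,
\[
\|X_U^t(w)-w\|_{\mathcal{P}^p,D(s,r,r)}\le\int_0^t\|X_U(X_U^{\tau}(w))\|_{\mathcal{P}^p,D(s,r,r)}\,d\tau\le t\,\mu\le\mu,
\]
where the middle step uses that, by (\ref{032601}) and because $J$ merely permutes the components of $U_z$ (and $\|\cdot\|_p\le\|\cdot\|_{p+1}$), one has $\|X_U(w)\|_{\mathcal{P}^p,D(s,r,r)}=\|U_y\|+r^{-2}\|U_x\|+r^{-1}\|U_z\|_p\le|||X_U|||_{\mathcal{P}^p,D(s,r,r)\times\Pi}\le\mu$. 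Unwinding (\ref{032601}), a displacement of $\mathcal{P}^p$-size $\mu$ means $\|\Delta x\|\le\mu$, $\|\Delta y\|\le\mu r^2$, $\|\Delta z\|_p\le\mu r$; a short computation using $r\le1$ and $\mu<\min\{\sigma,\sigma'\}$ then gives $(s-\sigma)+\mu<s$, $(r-\sigma')+\mu r<r$ and $(r-\sigma')^2+\mu r^2<r^2$, so $X_U^t(w)$ stays in a fixed compact subset of $D(s,r,r)$ for all $0\le t<T^{\ast}$. Hence the orbit never reaches $\p D(s,r,r)$; by the continuation theorem $T^{\ast}=1$ and, by continuity, the displacement estimate extends to $t=1$. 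Taking the supremum over $w\in D(s-\sigma,r-\sigma',r-\sigma')$ gives (\ref{091101}) for $0\le t\le1$, and the case $-1\le t\le0$ follows from the identical argument applied to $-U$ (equivalently, by reversing time).

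I do not expect any essential obstacle: this is precisely the reasoning behind Lemma A.4 of \cite{P1}, with Lemma \ref{012002} supplying all the analytic input by bounding $X_U$ in the weighted norm. The one point that needs care is the continuation step, where one must check that the $\mathcal{P}^p$-displacement $\mu$ — carried with weights $1$, $r^{-2}$, $r^{-1}$ in the $x$-, $y$-, $z$-slots — still keeps the orbit strictly inside $D(s,r,r)$; this is exactly where the hypotheses $r\le1$ and $\sigma'<r/2$ (so that $\mu<\sigma'<r$ and thus $\mu r<\sigma'$) are used.
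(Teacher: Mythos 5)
Your argument is correct and is exactly the route the paper intends: it invokes (\ref{081105}) from Lemma \ref{012002} to pass from the $p$-tame norm to the weighted sup-norm of $X_U$, and then runs the integral-equation plus continuation argument of Lemma A.4 in \cite{P1}, with the hypotheses $r\le 1$, $\sigma'<r/2$ and $\mu<\min\{\sigma,\sigma'\}$ keeping the orbit inside $D(s,r,r)$ just as you verify. No gaps.
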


\end{document}